
\documentclass[10pt]{article}
\usepackage{amsmath,amssymb}

\newtheorem{theorem}{Theorem}[section]
\newtheorem{lemma}[theorem]{Lemma}
\newtheorem{proposition}[theorem]{Proposition}

\newcommand{\qed}{\enspace\vrule height6pt width4pt depth2pt}
\newenvironment{proof}{\par\noindent{\bf Proof.}}{$\qed$\par\bigskip}

\newcommand{\N}{{\mathbb N}}
\newcommand{\Z}{{\mathbb Z}}

\newcommand{\U}{\operatorname{U}}

\newcommand{\supp}{\operatorname{supp}}

\newcommand{\Cl}{\operatorname{cl}}
\newcommand{\gr}{\mbox{gr}}
\newcommand{\FaM}{\operatorname{FaM}}
\newcommand{\ssupp}{\operatorname{Hsupp}}
\newcommand{\Fa}{\operatorname{Fa}}

\begin{document}

\date{}
\title{Normal domains with monomial presentations}
\author{Isabel Goffa, Eric Jespers, Jan Okni\'{n}ski}
\maketitle

\begin{abstract}
Let $A$ be a finitely generated commutative algebra
over a field $K$ with a presentation $A= K\langle
X_{1},\ldots ,X_{n} \mid R\rangle$, where $R$ is a
set of monomial relations in the generators
$X_{1},\ldots , X_{n}$.  So $A=K[S]$, the semigroup
algebra of the monoid $S=\langle X_{1},\ldots ,X_{n}
\mid R\rangle$. We characterize, purely in terms of
the defining relations, when $A$ is an integrally
closed domain, provided $R$ contains at most two
relations. Also the class group of such algebras $A$
is calculated.
\end{abstract}

\noindent {\it Mathematics Subject Classification 2000:} primary
16S36,
13B22; secondary 14M25, 16H05, 13C20, 20M14 \\
{\it keywords:} normal domain, class group, finitely presented
algebra, semigroup algebra, commutative semigroup, normal
semigroup


\section{Introduction}

Normal Noetherian domains,  also called integrally closed
Noetherian domains, are of fundamental importance in several areas
of mathematics. In the literature one can find several concrete
constructions of such rings that are algebras over a field $K$ and
that have a presentation in which the relations are of monomial
type. Such algebras are commutative semigroup algebras $K[S]$ of a
finitely generated abelian and cancellative monoid $S$ (that is,
$S$ is a submonoid of a finitely generated abelian group $G$).
Within the context of commutative ring theory, these algebras
received a lot of attention (see for example
\cite{bruns-herzog,gil}). We recall some well known facts. First,
a commutative semigroup algebra $K[S]$ of a monoid $S$ is
Noetherian if and only if $S$ is finitely generated. In this case
$K[S]$ also is finitely presented. Second, $K[S]$ is a domain if
and only if $S$ is a submonoid of a torsion free abelian group.
Recall that an affine semigroup $S$ is a finitely generated
submonoid of a free abelian group. If, moreover, the unit groups
$\U (S)$ is trivial, that is  $\U (S)=\{ 1\}$, then $S$ is said to
be positive. Third (see \cite[Proposition~6.1.4]{bruns-herzog} or
\cite[Proposition~13.5]{sturm}), if $M$ is an affine monoid then
$K[M]$ is normal if and only if $M$ is normal (i.e. if $g\in
MM^{-1}$, the group of fractions of $M$, and $g^{n}\in M$  for
some $n\geq 1$ then $g\in M$). Moreover, such monoids $M$ are
precisely the monoids of the form $U\times M'$, where $U$ is a
finitely generated free abelian group and $M'$ is a positive
monoid so that $M'=(M')(M')^{-1}\cap F^{+}$, with $F^{+}$ the
positive cone of a free abelian group $F$. Note that if $M$ is
positive and of rank $n$, that is $MM^{-1}$ is a group of torsion
free rank $n$, then $M$ is  isomorphic to a submonoid of ${\mathbb
N}^{n}$, a free abelian monoid of rank $n$. So, normality of
$K[S]$ is a homogeneous property, i.e., a condition on the monoid
$S$. This was one of the motivating reasons for these
investigations. Furthermore, it is well known that $\Cl (K[S])$,
the class group of $K[S]$, is naturally isomorphic with $\Cl (S)$,
the class group of $S$ (see for example
\cite[Theorem~2.3.1]{brunsbook}). As an application one obtains
much easier calculations for the class group of several classical
examples of Noetherian normal domains.  So the study of normal
positive monoids is relevant in the context of number theory.
Another reason for their importance is  their connection to
geometry, especially in the context of toric varieties and convex
polytopes (see for example
\cite{brunsbook,miller-sturm,sturm,villareal} for an extensive
bibliography of the subject, its computational aspects and
applications to other fields).

The study of the above problems is also crucial in a
noncommutative setting. Indeed, noncommutative
maximal orders of the form $K[S]$, with $S$ a
cancellative nonabelian monoid, appear in the search
of set-theoretical solutions of the quantum
Yang-Baxter equation. Gateva-Ivanova and Van den
Bergh \cite{GIVdB} and Etingof, Schedler and
Soloviev in \cite{ESS} showed that such solutions
are determined by monoids $M$ of $I$-type. In
\cite{jes-gof} this was extended to the larger class
of monoids of $IG$-type. Such monoids are contained
in a finitely generated abelian-by-finite group and
their algebras share many properties with
commutative polynomial algebras. In particular, they
are maximal orders in a division algebra and the
algebraic structure of $M$ is determined by a normal
positive submonoid and a finite solvable group
acting on it. More generally, as shown in \cite{PI},
every prime maximal order $K[S]$ satisfying a
polynomial identity is in some sense built on the
basis of a normal abelian submonoid of $S$ and every
abelian normal monoid can be used to construct a
family of noncommutative maximal orders. For more
details on noncommutative orders  we refer the
reader to \cite{jes-okn-book}.

In this paper we deal with Noetherian commutative
semigroup algebras $K[S]$ that are defined by at
most two monomial relations. We obtain a
characterization  purely in terms of the defining
relations, of when such an algebra is a normal
domain. It is easily seen that if $K[S]$ is such an
algebra then $S$ has codimension at most $2$. Recall
that $S$ has codimension $n-d$ if it is generated by
$n$ elements and $S\subseteq {\mathbb N}^{d}$.
Recently Dueck, Ho\c{s}ten and Sturmfels obtained
necessary conditions for such algebras to be normal.
In order to state this we recall that given a term
order $\prec$ on the free abelian monoid $F=\langle
u_{1},\ldots, u_{n}\rangle $, the initial ideal
$I_{\prec}$ of $S$ (corresponding to this order) is
the ideal of $F$ consisting of all leading (highest)
monomials in every relation that holds in $S$.

\begin{proposition} (\cite[Theorem~1]{dueck})
\label{dueck} Suppose $S$ is a positive monoid of codimension two.
If $S$ is normal then $S$ has a square free initial ideal (that
is, a semiprime ideal in $S$).
\end{proposition}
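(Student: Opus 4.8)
The plan is to move the statement into the Gröbner-basis / regular-triangulation dictionary and then exploit the fact that in codimension two the Gale dual is a planar configuration, which is combinatorially one-dimensional. Write $I_S=\ker(K[F]\to K[S])$ for the toric ideal in $K[F]=K[u_1,\dots,u_n]$; it is the lattice ideal of $L=\ker(\Z^n\to SS^{-1})$, a lattice of rank $n-d=2$, this being exactly the codimension-two hypothesis. Since $S$ is positive, a linear functional is strictly positive on all the generators $s_1,\dots,s_n\in\N^d$, so after choosing such a grading $I_S$ is homogeneous and $\mathcal{A}=\{s_1,\dots,s_n\}$ may be treated as a point configuration with regular triangulations. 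The key reduction is Sturmfels' correspondence: for a term order $\prec$ the radical of the initial ideal $I_\prec$ is the Stanley--Reisner ideal of the regular triangulation $\Delta_\prec$ of $\mathcal{A}$, and $I_\prec$ is square-free (equivalently, semiprime in $F$) if and only if $\Delta_\prec$ is unimodular, i.e.\ every maximal simplex has normalized volume one. Thus the Proposition reduces to showing that a normal codimension-two configuration admits a regular unimodular triangulation.

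First I would pass to the Gale dual. Because $L$ has rank two, the Gale transform $\mathcal{B}=\{b_1,\dots,b_n\}$ consists of integer vectors in the plane $\R^2$, read off as the columns of a $2\times n$ matrix whose rows span $L$. I would then use the chamber description of regular triangulations in Gale coordinates: a generic weight $w\in\R^2$ determines a regular triangulation $\Delta_w$ whose maximal simplices are exactly the $d$-subsets $\sigma\subseteq\{1,\dots,n\}$ such that $w$ lies in the interior of the cone $\mathrm{pos}\{\,b_k: k\notin\sigma\,\}$ spanned by the complementary Gale vectors. In codimension two this complement is a single pair $\{b_k,b_l\}$, and with respect to a lattice basis the normalized volume of $\sigma$ equals $|\det(b_k,b_l)|$. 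Consequently $\Delta_w$ is unimodular precisely when every complementary pair whose cone contains $w$ has determinant $\pm1$; producing a square-free initial ideal is therefore the same as producing a generic direction $w$ that is met only by such determinant-one cones.

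Next I would translate normality into a statement about $\mathcal{B}$. Normality of $S$ means that the rational cone $\R_{\ge0}\mathcal{A}$ contains no lattice point outside $S$; through Gale duality this becomes a covering condition in the plane, to the effect that the cones $\mathrm{pos}\{b_k,b_l\}$ coming from the determinant-one pairs already fill an open sweep of directions, so that some generic $w$ avoids every higher-determinant complementary cone. Granting this, choosing such a $w$ and the corresponding term order yields a unimodular $\Delta_w$, hence a square-free $I_\prec$, which is the desired conclusion.

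The main obstacle is precisely this middle step: establishing the exact equivalence between normality of $S$ and the planar covering property of the determinant-one complementary cones. This is the combinatorial heart of the result and is where the codimension-two hypothesis is genuinely used, since it is what collapses the Gale data to an angularly ordered family of planar vectors that can be analysed directly. Along the way one must dispose of the degenerate configurations, namely zero Gale vectors, which correspond to coloop generators appearing in no relation, and families of collinear $b_i$; these have to be split off or perturbed before the sweeping argument so that $\Delta_w$ is well defined. Once that equivalence and this bookkeeping are in place, the existence of the required term order, and hence of a square-free initial ideal, follows.
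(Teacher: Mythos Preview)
The paper does not prove this proposition: it is quoted verbatim as \cite[Theorem~1]{dueck} and used as a black box in the implication $(1)\Rightarrow(2)$ of Theorem~\ref{2relations}. So there is no proof in the present paper to compare your attempt against.

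That said, your outline is broadly the strategy of the cited Dueck--Ho\c{s}ten--Sturmfels paper: pass to the rank-two Gale dual, identify square-free initial ideals with unimodular regular triangulations via Sturmfels' correspondence, and then argue in the plane. Where your write-up falls short of a proof is exactly where you flag it yourself. You assert that normality of $S$ is equivalent to a ``planar covering property'' of the determinant-one complementary cones and then write ``granting this''; but this equivalence is the entire content of the theorem. You have not stated the covering condition precisely, nor indicated why normality forces the determinant-one cones to sweep out an open sector containing some generic $w$. Without that argument the remainder is just the standard dictionary, and the proposition is not established.

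A second, smaller issue: you invoke Sturmfels' triangulation correspondence after asserting that positivity of $S$ lets you choose a grading making $I_S$ homogeneous. That is fine, but the correspondence between term orders and regular triangulations requires some care when the configuration $\mathcal{A}$ is not in general position (repeated points, points on a common facet), and in codimension two these degeneracies are generic rather than exceptional. Your final paragraph gestures at ``splitting off or perturbing'' zero and collinear Gale vectors, but this bookkeeping interacts with the normality hypothesis and with the chamber structure, and would need to be carried out explicitly rather than deferred.
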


If, moreover, $S$ is a homogeneous monoid (that is, $S$ is defined
by relations that are homogeneous with respect to the total
degree) then the converse follows from Proposition~13.15 in
\cite{sturm}. The latter  says that if $S$ is a homogeneous
submonoid of ${\mathbb Z}^{d}$ such that for some order $\prec$
the corresponding initial ideal $I_{\prec}$ is square free, then
$S$ is a normal monoid. Theorem~2 in \cite{dueck} also says that
if $S$ is a positive monoid of codimension $n-d$ then there is an
algorithm to decide whether $A$ is normal, whose running time is
polynomial.

From the characterization proved in this paper it follows that the
converse of Proposition~\ref{dueck} holds for an arbitrary
positive monoid $S$ defined by at most two relations (so without
the homogeneous assumption). Exercise~13.17 in \cite{sturm}
implies that this converse is false in general.  It is worth
mentioning that other constraints for normality of abelian monoids
have been studied in \cite{ohsugi,rosales,simis}.

As an application, we determine the class group $\Cl
(S)$ in terms of the combinatorial data contained in
the defining relations.

\section{One-relator monoids}

Our main aim is to describe when a positive monoid defined by at
most two relations is normal. A first important obstacle to
overcome is to determine when such monoids are cancellative, i.e.,
when they are contained in a group  and next to decide when this
group can be assumed torsion free. Because of the comments given
in the introduction, and since we are mainly interested in such
monoids that are normal, we only need to deal with monoids $S$ so
that $\U(S) = \{1\}$. In this context we mention that in
\cite{ros} an algorithm of Contejean and Devie is used to
determine whether a finitely generated monoid given by a
presentation is cancellative.

We will use the following notation. By $\FaM_{n}$ we denote a free
abelian monoid of rank $n$. If $\FaM_{n}=\langle u_{1},\ldots ,
u_{n}\rangle$ and $w = u_{1}^{a_{1}}\cdots u_{n}^{a_{n}} \in
\FaM_{n}$, then put $\supp(w) = \{u_{i} \mid a_{i} \neq 0 \}$, the
support of $w$, and $\ssupp(w) = \{u_{j}\mid a_{j}>1\}$. We say
that $w$ is square free if $\ssupp (w)=\emptyset$. Now, suppose
$S$ has a presentation $$S = \langle u_{1},\ldots ,u_{n} \mid
w_{1} = v_{1},\ldots ,w_{m} = v_{m}\rangle,$$ where $w_{i}, v_{i}$
are nonempty words in the free abelian monoid $\FaM_{n}= \langle
u_{1},\ldots ,u_{n} \rangle$. Clearly,  $\U (S)=\{ 1\}$ and if $S$
is cancellative, then we may assume it has a presentation with
$$\supp(w_{i}) \cap \supp(v_{i}) = \emptyset,$$ for all $i$.

Recall from Lemma~6.1 in \cite{ohsugi} that if
$K[S]$ is a normal domain and $w_{i}=v_{i}$ is
independent of the other defining relations then
at least one of $w_{i}$ or $v_{i}$ is square
free.

\begin{proposition} \label{single}
Let $S$ be an abelian monoid defined by the
presentation $$\langle u_{1},\ldots ,u_{n}\mid
u_{1}\cdots u_{k} = u_{k+1}^{a_{k+1}} \cdots
u_{n}^{a_{n}}\rangle $$ for some positive integers
$a_{k+1},\ldots, a_{n}$ and some $k<n$. Let
$\FaM_{k(n-k)}=\langle x_{i,j} \mid 1\leq i \leq k,
\; 1\leq j \leq n-k\rangle$, a free abelian monoid
of rank $k(n-k)$. For $1\leq j\leq k$ put
 $$v_{j}=x_{j,1}^{a_{k+1}}x_{j,2}^{a_{k+2}}  \cdots
x_{j,n-k}^{a_{n}},$$  and for  $k+1 \leq j \leq n$
put
 $$v_{j}=x_{1,j-k}x_{2,j-k}\cdots x_{k,j-k}.$$
Then $S\cong V=\langle v_{1},\ldots, v_{n}\rangle
\subseteq \FaM_{k(n-k)}$ (in particular, $S$ is
cancellative) and $\langle v_{1},\ldots ,
v_{n}\rangle$ has $v_{1}\cdots
v_{k}=v_{k+1}^{a_{k+1}}\cdots v_{n}^{a_{n}}$ as its
only defining relation.
\end{proposition}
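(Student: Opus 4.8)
The plan is to realize $V$ explicitly as a submonoid of $\FaM_{k(n-k)}$ by computing multidegrees, and then to show that the natural surjection from $S$ onto $V$ is injective by a lattice-and-rewriting argument. First I would record the homomorphism $\phi\colon \FaM_n \to \FaM_{k(n-k)}$ determined by $u_j \mapsto v_j$, whose image is $V = \langle v_1,\ldots,v_n\rangle$. To see that $\phi$ factors through $S$, I would verify that the two sides of the relation $u_1\cdots u_k = u_{k+1}^{a_{k+1}}\cdots u_n^{a_n}$ have equal images: a direct computation shows that both $\prod_{j=1}^{k} v_j$ and $\prod_{j=k+1}^{n} v_j^{a_j}$ equal the monomial $\prod_{i=1}^{k}\prod_{l=1}^{n-k} x_{i,l}^{a_{k+l}}$. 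This yields a surjective homomorphism $S \to V$, and the whole proposition reduces to proving that it is injective.

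For the injectivity the key point is to determine exactly when two words in the $v_j$ coincide in $\FaM_{k(n-k)}$. Writing an arbitrary element of $V$ as $\prod_{j=1}^{n} v_j^{c_j}$ with $c_j \in \N$, I would compute that the exponent of $x_{i,l}$ in this word equals $a_{k+l}\,c_i + c_{k+l}$ (for $1\le i\le k$ and $1\le l\le n-k$), since $v_1,\ldots,v_k$ carry the first index and $v_{k+1},\ldots,v_n$ the second. Hence two exponent vectors $\mathbf{c},\mathbf{c}'$ give the same element of $V$ precisely when $a_{k+l}(c_i - c_i') + (c_{k+l} - c_{k+l}') = 0$ for all $i,l$. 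For fixed $l$ the quantity $a_{k+l}(c_i - c_i')$ is then independent of $i$, and since $a_{k+l} > 0$ this forces $c_1 - c_1' = \cdots = c_k - c_k' =: t$; consequently the difference vector is the integer multiple $t\,(e_1 + \cdots + e_k - a_{k+1}e_{k+1} - \cdots - a_n e_n)$ of the relation vector.

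The main obstacle, and the crux of the argument, is to upgrade this lattice condition to a statement about the congruence defining $S$: I must show that whenever $\mathbf{c} - \mathbf{c}' = t\,(e_1 + \cdots + e_k - a_{k+1}e_{k+1} - \cdots - a_n e_n)$ with $\mathbf{c},\mathbf{c}' \in \N^n$, the words $\prod u_j^{c_j}$ and $\prod u_j^{c_j'}$ already coincide in $S$. Assuming $t > 0$, nonnegativity of $\mathbf{c}$ forces $c'_{k+l} \ge t\,a_{k+l}$ for each $l$, so one may apply the rewriting rule $u_{k+1}^{a_{k+1}}\cdots u_n^{a_n} \to u_1\cdots u_k$ repeatedly, $t$ times in all, starting from $\mathbf{c}'$; at each intermediate step the exponents of $u_{k+1},\ldots,u_n$ remain nonnegative, so every move is legal and the end result is exactly $\mathbf{c}$. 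Thus $\mathbf{c}$ and $\mathbf{c}'$ represent the same element of $S$, which gives injectivity and hence the isomorphism $S\cong V$; in particular $S$ is cancellative. Finally, since the congruence on $\FaM_n$ with quotient $V$ has just been identified with the one generated by $u_1\cdots u_k = u_{k+1}^{a_{k+1}}\cdots u_n^{a_n}$, this single relation is the only defining relation of $V$, completing the proof.
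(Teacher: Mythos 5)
Your proposal is correct and follows essentially the same route as the paper's proof: both set up the natural surjection $S \to V$ by checking that the defining relation holds among the $v_{j}$, and both prove injectivity by comparing exponents of the variables $x_{i,l}$ in $\FaM_{k(n-k)}$, which forces any coincidence of words in the $v_{j}$ to come from an integer multiple of the relation vector. The only difference is organizational: the paper reduces to relations with disjoint supports and identifies them as powers of the defining relation, while you compute the kernel congruence directly and spell out, via the rewriting-with-nonnegativity check, why a difference of $t$ times the relation vector already implies equality in $S$ --- a step the paper treats as immediate.
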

\begin{proof} Let $V=\langle v_{1},\ldots , v_{n}\rangle
\subseteq \FaM_{k(n-k)}$. Clearly, $v_{1}\cdots
v_{k} = v_{k+1}^{a_{k+1}} \cdots v_{n}^{a_{n}}$ and
thus $V=\langle v_{1},\ldots,v_{n}\rangle$ is a
natural homomorphic image of $S$.

Since all $a_{i}\neq 0$, it is easy to see that
every relation in $V$ (with disjoint supports with
respect to the $v_{i}$'s)  must involve all
generators $v_{i}$. Moreover, since $v_{1},\;
v_{k+1}$ are the only generators involving
$x_{1,1}$, it follows that in such a relation
$v_{1},\; v_{k+1}$ are on opposite sides of the
equality. And also $v_{k+2},\ldots ,v_{n}$ must be
on the side opposite to $v_{1}$ (look at the
appearance of $x_{1,2},\, x_{1,3},\, \ldots ,
x_{1,n-k}$ in order to see this).  Similarly, by
looking at the appearance of $x_{21},x_{31},\ldots,
x_{k1}$, we get that $v_{2},\ldots ,v_{k}$ must be
on the side opposite to $v_{k+1}$. It follows that
every relation in $V$, possibly after cancellation,
must be of the form
\begin{eqnarray}\label{rel} v_{1}^{c_{1}}\cdots
v_{k}^{c_{k}} = v_{k+1}^{c_{k+1}} \cdots
v_{n}^{c_{n}}
\end{eqnarray}
for some positive integers $c_{j}$. Again, using the
fact that $x_{i,j}$'s are independent and comparing
the exponent of $x_{i,j}$ on both sides of
(\ref{rel}), we get that $a_{k+j}c_{i}=c_{k+j}$ for
$1\leq i \leq k$ and $j=1,2,\ldots ,n-k$. This
implies that $c_{1}=c_{2}=\cdots =c_{k}$. Hence
relation (\ref{rel}) is of the form $(v_{1}\cdots
v_{k})^{c_{1}}=(v_{k+1}^{a_{k+1}}\cdots
v_{n}^{a_{n}})^{c_{1}} $. So it is a consequence of
the relation defining $S$ with every $u_{j}$
replaced by $v_{j}$. It follows that $V\cong S$.
\end{proof}

Note that one can verify that the monoid $V$, as described in the
previous proposition, is such that $V=VV^{-1}\cap \FaM_{k(n-k)}$.
So, by the comments given in the introduction, $V$ is normal.
Alternatively, it easily follows from the defining relation that
$S=\bigcup_{1\leq i \leq k} F_{i}$, with $F_{i}= \langle u_{j}
\mid 1\leq j\leq n,\; j\neq i\rangle$ a free abelian monoid with
group of quotients $SS^{-1}$. Since each $F_{i}$ is normal we thus
obtain that $S$ is normal as well
(\cite[Proposition~3.1.1]{brunsbook}).

 Hence, the Proposition~\ref{single} and its preceding comment
yield at once a description one-relator positive monoids that are
normal.

\begin{proposition}\label{1relation}
Let $S$ be the abelian monoid defined by the
presentation $$\langle u_{1},\ldots ,u_{n}\mid
w_{1}=w_{2}\rangle , $$ with nonempty words $w_{1}=
u_{1}^{a_{1}}\cdots u_{k}^{a_{k}}$, $w_{2} =
u_{k+1}^{a_{k+1}} \cdots u_{n}^{a_{n}}$, where
$k<n$, and each $a_{i}$ is a nonnegative integer.
The following conditions are equivalent.
\begin{enumerate}
\item The
semigroup $S$ is a normal positive monoid, normal
(or equivalently, the semigroup algebra $K[S]$ is
a normal domain).
\item  $\ssupp(w_{1}) = \emptyset$ or $\ssupp(w_{2}) =
\emptyset$.
\end{enumerate}
\end{proposition}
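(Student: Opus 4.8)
The plan is to prove the two implications separately, exploiting the single-relation structure to invoke results already at hand. The equivalence built into condition (1)---that $S$ being a normal positive monoid is the same as $K[S]$ being a normal domain---is exactly the standard dictionary recalled in the introduction ($K[S]$ is a domain precisely when $S$ embeds in a torsion-free abelian group, and for an affine monoid normality of $K[S]$ is equivalent to normality of $S$), so I treat it as known and work at the level of the monoid.

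For $(1) \Rightarrow (2)$ the argument is immediate. Since $S$ is defined by the single relation $w_{1}=w_{2}$, this relation is trivially independent of the remaining (empty) set of defining relations. Hence Lemma~6.1 of \cite{ohsugi}, recalled above, applies: if $K[S]$ is a normal domain then at least one of $w_{1}$, $w_{2}$ is square free, which is precisely the assertion $\ssupp(w_{1}) = \emptyset$ or $\ssupp(w_{2}) = \emptyset$.

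The substance lies in $(2) \Rightarrow (1)$, and here the strategy is to reduce to Proposition~\ref{single}. By the symmetry of the presentation I may assume $\ssupp(w_{1}) = \emptyset$, so every exponent $a_{i}$ with $i \le k$ lies in $\{0,1\}$ and $w_{1} = \prod_{i \in \supp(w_{1})} u_{i}$. First I would dispose of the generators occurring in neither word: if $a_{\ell}=0$ then $u_{\ell}$ appears in no relation, so $S$ splits as a direct product of the submonoid generated by the generators actually occurring in $w_{1}=w_{2}$ with a free abelian monoid on the remaining generators. A direct product of normal positive monoids is normal and positive, and free abelian monoids are normal and positive, so it suffices to treat the factor in which every generator occurs in the relation.

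In that factor, after relabeling, the presentation is exactly $\langle u_{1},\ldots,u_{n} \mid u_{1}\cdots u_{k} = u_{k+1}^{a_{k+1}}\cdots u_{n}^{a_{n}}\rangle$ with all $a_{k+1},\ldots,a_{n}$ positive and $1 \le k < n$ (positivity of these exponents and the inequality $k<n$ follow from $w_{1}$, $w_{2}$ being nonempty). This is precisely the hypothesis of Proposition~\ref{single}, which identifies $S$ with the submonoid $V \subseteq \FaM_{k(n-k)}$; being a submonoid of a free abelian monoid, $V$ is cancellative and positive, and the comment following Proposition~\ref{single} shows $V = VV^{-1}\cap \FaM_{k(n-k)}$ is normal. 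Transporting back through the isomorphism and reinstating the free direct factor yields that $S$ is a normal positive monoid, giving (1). I expect the only delicate point to be the bookkeeping that passes from the given presentation to the normalized form of Proposition~\ref{single}---namely verifying the direct-product decomposition and confirming that the surviving relation has the required shape---since the genuine normality computation is already supplied by Proposition~\ref{single} and its accompanying comment.
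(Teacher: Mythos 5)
Your proof is correct and takes essentially the same route as the paper: the implication $(1)\Rightarrow(2)$ via Lemma~6.1 of \cite{ohsugi} (the single relation being trivially independent), and $(2)\Rightarrow(1)$ by splitting off a free abelian factor on the generators with zero exponent and reducing the remaining factor to Proposition~\ref{single} together with the normality of $V$ noted in the comment following it. The paper compresses exactly this argument into the remark that Proposition~\ref{single} and its accompanying comment ``yield at once'' the description; your explicit bookkeeping of the zero exponents and the direct-product reduction simply spells out what the paper leaves implicit.
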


In the remainder of  this section we describe the class group
$\Cl(S)$ of a  one-relator normal positive monoid $S$. For
convenience sake we recall some terminology for an affine normal
monoid $M$ (see \cite{cho,gil}; at an algebra level we refer to
\cite{fossum}). For a subset $I$ of $MM^{-1}$ we put $(M:I)=\{
g\in MM^{-1} \mid gI\subseteq M\}$. A fractional ideal $I$ of $M$
is a subset of $MM^{-1}$ so that $MI\subseteq I$ and $mI\subseteq
M$ for some $m\in M$. A fractional ideal is said to be divisorial
if $I=I^{*}$, where $I^{*}=(M:(M:I))$. The set of all divisorial
fractional ideals is denoted by $D(M)$. It is a free abelian group
for the divisorial product $I*J=(IJ)^{*}$, for $I,J\in D(M)$, with
basis the set of minimal prime ideals. Also,  $M=\bigcap M_{P}$,
where the intersection runs over all minimal primes of $M$, and
all localizations $M_{P}$ are discrete valuation monoids (see for
example \cite{cho,gil}). Furthermore, for an ideal $I$ of $M$ one
has, in the divisorial group $D(M)$, that $I=(\prod_{P}
P^{n(P)})^{*}$ if and only if $M_{P}I=M_{P}P^{n(P)}$, with all
$n_{P}\geq 0$. Moreover, $n_{P}>0$ if and only if $I\subseteq P$.

By definition $\Cl(M)=D(M)/P(M)$, where $P(M)=\{ Mg\mid g\in
MM^{-1}\}$.

Let $S$ be again as in Proposition~\ref{1relation}. We will use
the same notation for the generators $u_{i}$ of the free monoid
$\FaM_{n}$ and for their images in $S$, if unambiguous. So, every
$S_{u_{j}}$ in $D(S)$ is a (unique) product of the minimal primes
of $S$. In the following lemma we compute these decompositions
provided all $a_{i}$ are positive integers. Clearly, in this case,
the minimal primes of $S$ are the ideals $P_{yz}$ generated by the
set $\{ u_{y},u_{z}\}$, where $y \in \{1,\ldots , k\}$, $z\in
\{k+1, \ldots , n\}$.

\begin{lemma}\label{prin}
Let $S = \langle u_{1},\ldots , u_{n}\mid u_{1}\cdots u_{k} =
u_{k+1}^{a_{k+1}}\cdots u_{n}^{a_{n}}\rangle$ be a normal monoid,
 with all $a_{i}\geq 1$, and let $P_{yz}$ denote the minimal
prime ideal of $S$ that is generated by the set $\{
u_{y},u_{z}\}$, where $y \in \{1,\ldots , k\}$, $z\in \{k+1,
\ldots , n\}$. Then $$Su_{z} = P_{1z} \ast \cdots \ast P_{kz}
\quad \mbox{and} \quad Su_{y} = P_{yk+1}^{a_{k+1}} \ast \cdots
\ast P_{yn}^{a_{n}},$$ for  $z\in \{k+1, \ldots , n\}$ and  $y \in
\{1,\ldots , k\}$.
\end{lemma}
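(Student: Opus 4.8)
The plan is to exploit the characterization, recalled in the paragraph before the lemma, that in the divisorial group $D(S)$ a principal ideal $Su$ factors as $(\prod_P P^{n(P)})^*$, where the exponent $n(P)$ is determined locally by $S_P(Su) = S_P P^{n(P)}$, with $n(P) > 0$ if and only if $Su \subseteq P$. Since the localizations $S_P$ at a minimal prime $P$ are discrete valuation monoids, each such exponent is simply the value $v_P(u)$ of the corresponding discrete valuation applied to $u$. So the whole computation reduces to determining these valuations $v_{P_{yz}}(u_j)$ for each generator $u_j$ and each minimal prime $P_{yz} = (u_y, u_z)$.

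\medskip
\noindent
First I would fix a minimal prime $P_{yz}$ with $y \in \{1,\ldots,k\}$, $z \in \{k+1,\ldots,n\}$, and identify the valuation $v_{P_{yz}}$ explicitly. The key structural fact I would use is that, because $S$ is normal, one has $S = \bigcap_P S_P$ and, as noted in the comment following Proposition~\ref{single}, $S$ is the union of the free monoids $F_i = \langle u_j \mid j \neq i\rangle$ with common group of quotients $SS^{-1}$. Localizing at $P_{yz}$ should invert every generator except those in $\{u_y, u_z\}$, so the residual valuation measures the ``net exponent'' of $u_y$ and $u_z$ modulo the one defining relation. Concretely, in $SS^{-1}$ the relation $u_1\cdots u_k = u_{k+1}^{a_{k+1}}\cdots u_n^{a_n}$ lets me solve for either $u_y$ or $u_z$ in terms of the inverted generators, and I would read off $v_{P_{yz}}(u_y)$ and $v_{P_{yz}}(u_z)$ from this. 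I expect to find $v_{P_{yz}}(u_z) = 1$ and $v_{P_{yz}}(u_y) = a_z$, with all cross-valuations vanishing after the appropriate generators are inverted, so that $v_{P_{y'z}}(u_z) = 1$ for every $y'$ and $v_{P_{yz'}}(u_y) = a_{z'}$ for every $z'$.

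\medskip
\noindent
Once the valuations are in hand, the two claimed formulas follow by assembling the exponents: for fixed $z$, the generator $u_z$ lies in exactly the primes $P_{1z},\ldots,P_{kz}$ (those of the form $P_{yz}$ with $y$ ranging over $1,\ldots,k$), each with valuation $1$, giving $Su_z = P_{1z} \ast \cdots \ast P_{kz}$; and for fixed $y$, the generator $u_y$ lies in the primes $P_{y,k+1},\ldots,P_{yn}$ with valuations $a_{k+1},\ldots,a_n$ respectively, giving $Su_y = P_{y,k+1}^{a_{k+1}} \ast \cdots \ast P_{yn}^{a_n}$. I would double-check consistency against the defining relation: multiplying the left-hand factorizations over $y = 1,\ldots,k$ and the right-hand ones over the $u_{z}^{a_z}$ must yield the same divisorial ideal, which is exactly the statement that $u_1\cdots u_k$ and $u_{k+1}^{a_{k+1}}\cdots u_n^{a_n}$ are equal in $S$; verifying that the exponents of each $P_{yz}$ match on both sides is a good internal sanity check.

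\medskip
\noindent
The main obstacle I anticipate is pinning down the discrete valuation $v_{P_{yz}}$ rigorously rather than heuristically. The delicate point is that $P_{yz}$ is generated by \emph{two} elements $u_y, u_z$, yet the localization $S_{P_{yz}}$ is a rank-one discrete valuation monoid, so these two generators cannot be independent there — the relation must force a linear dependence between their valuations. Making this precise requires showing that inverting all generators outside $\{u_y,u_z\}$ collapses the relation to an expression that determines $v(u_y)$ as a positive integer multiple of $v(u_z)$, and confirming that $u_y, u_z \notin (S_{P_{yz}})^{-1}$ while every other $u_j$ becomes a unit. I would handle this by working directly with the embedding $S \cong V \subseteq \FaM_{k(n-k)}$ from Proposition~\ref{single}, where the coordinate functions $x_{i,j}$ give an explicit system of valuations on $SS^{-1}$; identifying which coordinate projection (or $\N$-linear combination of them) realizes $v_{P_{yz}}$ should make the exponent bookkeeping transparent and turn the whole argument into a finite, verifiable computation.
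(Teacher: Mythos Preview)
Your proposal is correct and follows essentially the same route as the paper: localize at each $P_{yz}$, observe that every generator other than $u_y$ and $u_z$ becomes a unit there, and then read off from the single relation that $S_{P_{yz}}u_y = S_{P_{yz}}u_z^{a_z}$ (hence $v_{P_{yz}}(u_y)=a_z$) and $S_{P_{yz}}u_z = S_{P_{yz}}P_{yz}$ (hence $v_{P_{yz}}(u_z)=1$). The paper does not bother with the explicit embedding into $\FaM_{k(n-k)}$ for this step; once you note that inverting the other generators collapses the relation to $u_y \cdot (\text{unit}) = u_z^{a_z}\cdot(\text{unit})$ in $S_{P_{yz}}$, the valuations are immediate and the extra machinery is unnecessary.
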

\begin{proof}
First, let $y\in \{ 1,\ldots ,k\}$. Note that the only minimal
primes containing $u_{y}$ are $P_{y,z}$, with $z\in  \{ k+1,\ldots
, n\}$. Hence $Su_{y}=\left( \prod_{k\leq z\leq n}
P_{y,z}^{e(z)}\right)^{*}$, with $e(z)\geq 1$. Furthermore, in the
localization $T=S_{P_{y,z}}$ we have that $u_{i},u_{j}$ are
invertible for $y\neq i\in \{ 1,\ldots , k\}$ and $z\neq j\in \{
k+1,\ldots , n\}$. Hence, from the defining relation it follows
that $Tu_{y}=Tu_{z}^{a_{z}}$ and thus also
$Tu_{y}=TP_{y,z}^{a_{z}}$. Consequently, $e(z)=a_{z}$ and thus
$Su_{y} =P_{yk+1}^{a_{k+1}} \ast \cdots \ast P_{yn}^{a_{n}}$, as
desired.

Second, assume $z\in \{k+1,\ldots ,n\}$. Then, for any $y\in \{
1,\ldots , k\}$, it is easily seen from the defining relation that
$Tu_{y}\subseteq Tu_{z}$, with $T=S_{P_{y,z}}$. Thus
$Tu_{z}=TP_{y,z}$. Therefore, as above, $Su_{z}=P_{1z} \ast \cdots
\ast P_{kz}$.
\end{proof}

\begin{theorem}\label{1main}
Let $S = \langle u_{1},\ldots , u_{n},\ldots , u_{m}
\mid u_{1}\cdots u_{k} = u_{k+1}^{a_{k+1}}\cdots
u_{n}^{a_{n}}\rangle$ be a positive normal monoid
(with all $a_{i}\geq 1$ and $n\leq m$). Then
$$\Cl(K[S])\cong \Cl(S) \cong
\Z^{k(n-k)-(n-1)}\times (\Z_{d})^{k-1},$$ where $d=\gcd (a_{k+1},
\ldots ,a_{n})$, $k(n-k)$ is the number of minimal primes in $S$
not containing one of the independent generators $u_{n+1},\ldots ,
u_{m}$, and $m-1$ is the torsion free rank of $SS^{-1}$.
\end{theorem}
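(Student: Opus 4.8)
The plan is to reduce everything to a Smith-normal-form computation in the free abelian group $D(S)$. The first isomorphism $\Cl(K[S])\cong\Cl(S)$ is the standard one recalled in the introduction (\cite[Theorem~2.3.1]{brunsbook}), so the real content is the computation of $\Cl(S)=D(S)/P(S)$. First I would describe $D(S)$. Since $u_{n+1},\ldots,u_{m}$ are free generators not occurring in the defining relation, the minimal primes of $S$ are exactly the $k(n-k)$ ideals $P_{yz}=(u_{y},u_{z})$ with $y\leq k<z\leq n$ together with the $m-n$ principal primes $Q_{j}=Su_{j}$, $j>n$; hence $D(S)$ is free abelian of rank $k(n-k)+(m-n)$ on these primes. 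As $P(S)$ is generated by the principal divisorial ideals $Su_{1},\ldots,Su_{m}$ and $Su_{j}=Q_{j}$ for $j>n$, each $Q_{j}$ is principal and so is killed in the quotient. Thus $\Cl(S)$ is the quotient of the free abelian group $\Z^{k(n-k)}$ on the classes $P_{yz}$ by the subgroup generated by the images of $Su_{1},\ldots,Su_{n}$.

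Next, writing the computation additively and using Lemma~\ref{prin} (its local argument applies verbatim here, since localizing at $P_{yz}$ inverts every other generator, including $u_{n+1},\ldots,u_{m}$), these generators are $R_{z}:=\sum_{y=1}^{k}P_{yz}$ for $k<z\leq n$ and $R^{y}:=\sum_{z=k+1}^{n}a_{z}P_{yz}$ for $1\leq y\leq k$, a total of $n$ vectors in $\Z^{k(n-k)}$. The key structural observation, and the single dependency that drops the rank from $n$ to $n-1$, is that $\sum_{z}a_{z}R_{z}=\sum_{y}R^{y}$ (both equal $\sum_{y,z}a_{z}P_{yz}$), which is just the defining relation of $S$ read inside $D(S)$. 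Concretely I would eliminate the generator $P_{kz}$ for each $z$ via $R_{z}=0$, i.e.\ set $P_{kz}=-\sum_{y<k}P_{yz}$; substituting this into $R^{k}$ shows $R^{k}\equiv-\sum_{y<k}R^{y}$, so $R^{k}$ becomes redundant. What remains is the free abelian group on the $(k-1)(n-k)$ classes $P_{yz}$ with $y<k$, modulo the $k-1$ relations $\sum_{z=k+1}^{n}a_{z}P_{yz}=0$, one for each $y\in\{1,\ldots,k-1\}$.

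These relations are decoupled over $y$, so $\Cl(S)\cong\bigoplus_{y=1}^{k-1}\bigl(\Z^{n-k}/\langle(a_{k+1},\ldots,a_{n})\rangle\bigr)$. Each summand is the cokernel of a single integer row vector; writing $(a_{k+1},\ldots,a_{n})=d\,(b_{k+1},\ldots,b_{n})$ with $\gcd(b_{i})=1$ and extending the primitive vector $(b_{i})$ to a $\Z$-basis of $\Z^{n-k}$ gives $\Z^{n-k}/\langle(a_{i})\rangle\cong\Z^{n-k-1}\times\Z_{d}$. Summing over the $k-1$ values of $y$ yields $\Cl(S)\cong\Z^{(k-1)(n-k-1)}\times(\Z_{d})^{k-1}$, and $(k-1)(n-k-1)=k(n-k)-(n-1)$, which is the claimed answer. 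The statement about the torsion-free rank of $SS^{-1}$ being $m-1$ follows separately, since $SS^{-1}\cong\Z^{m}$ modulo the single nonzero relation vector $(1,\ldots,1,-a_{k+1},\ldots,-a_{n},0,\ldots,0)$.

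The one point that genuinely needs care is the rank-$(n-1)$ claim: one must verify that the displayed dependency is the \emph{only} one among the $n$ vectors $R_{z},R^{y}$. The elimination argument above settles this cleanly, reducing to $k-1$ visibly independent single-row relations, and I would present it in that form rather than attempting a direct determinant or full Smith-form manipulation of the $n\times k(n-k)$ matrix, which would be far more cumbersome.
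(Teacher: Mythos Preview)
Your proof is correct and follows essentially the same approach as the paper: both compute $\Cl(S)=D(S)/P(S)$ by writing down, via Lemma~\ref{prin}, the $k(n-k)\times n$ presentation matrix whose columns are the principal divisors $Su_{i}$ expressed in the minimal primes $P_{yz}$, and then reduce to Smith normal form. Your execution is a bit more structural---eliminating the $P_{kz}$ via the relations $R_{z}$ and observing that the remaining $k-1$ relations $R^{y}$ decouple over $y$---whereas the paper performs explicit block row and column operations on the matrix; but the underlying computation and the invocation of Lemma~\ref{prin} are the same.
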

\begin{proof}
Clearly, $S=S'\times \FaM_{m-n}$, where $\FaM_{r}=\langle
a_{n+1},\ldots , a_{m}\rangle$ is a free abelian monoid, and
$S'=\langle  u_{1},\ldots , u_{n} \mid u_{1}\cdots u_{k} =
u_{k+1}^{a_{k+1}}\cdots u_{n}^{a_{n}} \rangle $. So, $S$ is normal
if and only if $S'$ is normal. Because also   $\Cl (S')=\Cl(S)$,
we may assume $S=S'$.

Clearly,  the result is true for $k=1$. So assume
that $k\geq 2$. As there are $k(n-k)$ minimal primes
$P_{yz}$ in $S$ (with $1 \leq y \leq k$, $k+1 \leq z
\leq n$), we get that $D(S) \cong \Z^{k(n-k)}$. On
the other hand, $P(S) = \gr (Su_{i} \mid i=1, \ldots
, n)$. By Lemma~\ref{prin},
$Su_{j}=\left(\prod_{l=1}^{k} P_{lj}\right)^{*} ,
Su_{i}=\left(\prod_{l=k+1}^{n}
P_{il}^{a_{l}}\right)^{*}$ for $i \in \{1, \ldots ,
k\}, j \in \{k+1,\ldots , n\}$. We consider
\begin{eqnarray*}
\Cl(S) =\gr(P_{yz} \mid y \in \{1, \ldots , k\}, \; z \in
\{k+1,\ldots , n\})  / \gr(Su_{i} \mid 1 \leq i \leq n).
\end{eqnarray*}
as a finitely generated $\mathbb Z$-module. So  its
presentation corresponds to an integer matrix $M$ of
size $k(n-k) \times n$. The rows of $M$ are indexed
by elements of the set $R=\{ (i,j) \mid i\in
\{1,\ldots ,k\}, j\in \{ k+1,\ldots, n\} \}$. We
agree on the lexicographic ordering of the set of
rows of $M$. The columns are indexed by $C=\{
1,2,\ldots ,n\}$, where the $i$-th column
corresponds to the generator $Su_{i}$, written as a
vector in terms of the minimal primes of $S$.

We consider the block decomposition of $M$ determined by the
following partitions of the sets $C$ and $R$ of columns and rows:
$C=D_{1}\cup D_{2}$, where $D_{1}=\{1,\ldots ,k\}$ and $D_{2}=\{
k+1,\ldots ,n\}$ and $R=R_{1}\cup \cdots \cup R_{k}$, where
$R_{i}=\{ (i,j)\mid j=k+1,\ldots ,n\}$. Then $M$ has the following
form: $$\left(
\begin{array}{ccccccccc} a_{k+1} & 0 &  0 &  \cdots
& 0 & 1 & 0 &\cdots  & 0 \\ a_{k+2}  & 0 & 0 & \cdots  & 0 & 0 & 1
& \cdots & 0 \\ \cdots & \cdots & \cdots &\cdots  & \cdots  &
\cdots &\cdots &\cdots & \cdots \\ a_{n}    & 0 &  0 & \cdots & 0
& 0 & 0 & \cdots &  1\\ 0 &  a_{k+1} & 0 & \cdots  & 0 & 1 & 0 &
\cdots & 0 \\ 0 & a_{k+2}  & 0 & \cdots  & 0 & 0 & 1 & \cdots & 0
\\ \cdots  & \cdots &\cdots
&\cdots  &\cdots &\cdots &\cdots &\cdots & \cdots \\
0 & a_{n}  &0   &  \cdots  & 0 & 0 & \cdots  & \cdots  &  1\\
\cdots & \cdots & \cdots & \cdots  & \cdots  & \cdots &\cdots
&\cdots & \cdots \\ \cdots & \cdots & \cdots & \cdots  & \cdots  &
\cdots &\cdots &\cdots & \cdots \\ \cdots & \cdots & \cdots &
\cdots  & \cdots  & \cdots &\cdots &\cdots & \cdots \\ \cdots &
\cdots & \cdots & \cdots  & \cdots  & \cdots &\cdots &\cdots &
\cdots \\ 0 &
\cdots &0 & \cdots &  a_{k+1} & 1 & 0 & \cdots & 0 \\
 0 &  \cdots  & 0 & \cdots &  a_{k+2}  & 0 & 1 & \cdots & 0 \\
\cdots & \cdots & \cdots &\cdots  & \cdots  & \cdots &\cdots &\cdots & \cdots \\
0 & \cdots & 0 & \cdots   &  a_{n}    & 0 & 0 &  \cdots   &  1\\
\end{array} \right) $$
We subtract the subsequent rows of the last row block $R_{k}$ from
the corresponding rows of all other row blocks. Then from column
$k$ we subtract $\sum _{i=k+1}^{n}a_{i}C_{i}$, where $C_{i}$
denotes the $i$-the column. The obtained matrix $M'$ has the
$(R_{k},C)$-block of the form $M'_{R_{k},C} = (0,I)$, where $I$ is
the $(n-k)\times (n-k)$ identity matrix and $M'_{R_{i}D_{2}}$ is a
zero matrix for every $i\neq k$. Let $T=R\setminus R_{k}$. The
last column of the submatrix $M'_{T,D_{1}}$ has the form
$(-a_{k+1},\ldots ,-a_{n}, \ldots ,-a_{k+1},\ldots ,-a_{n})^{t}$,
hence adding all other columns of $M'_{T,D_{1}}$ to it, we get a
matrix $N$ such that $N=M_{T,D_{1}}$. Clearly, the normal form of
$N$ involves $k-1$ entries equal to $d=\gcd (a_{k+1},\ldots
,a_{n})$ and no other nonzero entries. The result follows.
\end{proof}

\section{Two-relator monoids}

In this section we obtain a characterization of normal positive
monoids that are defined by two relations. The class group of such
monoids $S$, and therefore of the corresponding algebras $K[S]$,
is also determined.

\begin{theorem}\label{2relations}
Let $S = \langle u_{1},\ldots ,u_{n} \rangle$ be a
finitely presented abelian monoid with independent
defining relations $w_{1} = w_{2}$ and $w_{3} =
w_{4}$ and, $\left| \supp(w_{i}) \right| \geq 1$ for
all $i$. The following conditions are equivalent.
\begin{enumerate}
\item  The semigroup
$S$ is a normal positive monoid (or equivalently,
the semigroup algebra $K[S]$ is a normal domain).
\item $S$ is a positive monoid with an initial ideal $I_{\prec}$
of $S$ that is square free.
\item The
following conditions hold:
\begin{enumerate}
\item $\supp(w_{1}) \cap \supp(w_{2}) = \emptyset$,
$\supp(w_{3}) \cap \supp(w_{4}) = \emptyset$,
\item $\ssupp(w_{1}) = \emptyset$ or $\ssupp(w_{2}) = \emptyset$,
\item $\ssupp(w_{3}) = \emptyset$ or $\ssupp(w_{4}) = \emptyset$,
\item
if there exist $i \in \{1,2\}$, $j \in \{3,4\}$ such
that $\supp(w_{i}) \cap \supp(w_{j}) \neq
\emptyset$, then one of the following properties
holds (we may assume for simplicity that $i=1$ and
$j=3$):
\begin{itemize}
\item
$\supp(w_{k}) \cap \supp(w_{l}) = \emptyset$ for all
pairs $\{k,l\} \neq \{1,3\}$  with $k\neq l$, and
$\ssupp(w_{2}) = \emptyset$ or $\ssupp(w_{4})=
\emptyset$,
\item there exists a pair $k\neq l$ such that $\{2,4\}\neq \{k,l\} \neq \{1,3\}$
and $\supp(w_{k}) \cap \supp(w_{l}) \neq \emptyset$
(for simplicity assume $k=2$, $l=3$), $\supp(w_{4})
\cap \supp(w_{i}) = \emptyset$ for $i = 1,2,3$ and
$\ssupp(w_{4}) = \emptyset$.
\end{itemize}
\end{enumerate}
\end{enumerate}
\end{theorem}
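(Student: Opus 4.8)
The plan is to prove the cycle of implications $(1)\Rightarrow(2)\Rightarrow(3)\Rightarrow(1)$, after first discarding the generators $u_i$ that occur in no relation. As in the proof of Theorem~\ref{1main}, each such free generator splits off a direct factor $\FaM_1$, which affects neither normality nor the validity of the combinatorial conditions, so I may assume every $u_i$ lies in $\supp(w_1)\cup\cdots\cup\supp(w_4)$. The implication $(1)\Rightarrow(2)$ is then immediate: a monoid presented by two relations has codimension at most two, so Proposition~\ref{dueck} applies, the codimension-one case reducing directly to Proposition~\ref{1relation}.

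For $(2)\Rightarrow(3)$ I would fix a term order $\prec$ for which $I_{\prec}$ is square free and read the four conditions off the shape of the leading monomials. Condition (a) records that in a reduced presentation the two sides of each relation have disjoint support, which is automatic in the cancellative setting in which $I_{\prec}$ is defined (cf.\ the remark preceding Proposition~\ref{single}). Conditions (b) and (c) are then immediate: the leading monomial of $w_1=w_2$ (respectively $w_3=w_4$) is a generator of $I_{\prec}$, so square-freeness of $I_{\prec}$ forces that monomial, and hence one side of the relation, to be square free, in agreement with Lemma~6.1 of \cite{ohsugi}. The delicate point is condition (d): when the two relations share a variable, the combination of the two binomials can reduce to a further relation of $S$ whose leading term fails to be square free unless the overlapping supports are arranged exactly as in one of the two listed alternatives. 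I would make this precise by computing the reduced Gr\"obner basis in each configuration of overlapping supports and checking that square-freeness of $I_{\prec}$ holds precisely under (d).

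For the reverse implication $(3)\Rightarrow(1)$ I would follow the template of Proposition~\ref{single}: assuming (a)--(d), construct an explicit monoid $V=\langle v_1,\ldots,v_n\rangle$ inside a free abelian monoid $\FaM_N$ whose generators satisfy exactly the two given relations, prove $V\cong S$ (so that $S$ is cancellative, torsion free and positive), and then verify normality either by checking $V=VV^{-1}\cap\FaM_N$ or, more conveniently, by exhibiting $S$ as a finite union $\bigcup_i F_i$ of free abelian submonoids sharing the common group of quotients $SS^{-1}$ and invoking \cite[Proposition~3.1.1]{brunsbook}. The construction splits naturally into the case where the two relations involve disjoint variable sets, in which $S$ decomposes (up to the shared free part) as a product of two one-relator monoids and normality follows from Proposition~\ref{1relation} applied factorwise, and the interacting case governed by (d), where the generators $v_i$ must be designed so that each shared variable is distributed consistently across both relations.

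The main obstacle I anticipate is precisely the interacting case of (d) in the direction $(3)\Rightarrow(1)$: here the embedding into $\FaM_N$ must simultaneously realize both relations and no others, and the two sub-cases of (d) require separate, somewhat intricate choices of the exponent data, after which the normality verification (a union-of-normal-monoids argument in the spirit of the comment following Proposition~\ref{single}) must be carried out in each. A secondary difficulty is the Gr\"obner-basis bookkeeping in $(2)\Rightarrow(3)$, where one must rule out, across all admissible term orders, any reduction that would introduce a non-square-free generator into $I_{\prec}$ outside the configurations permitted by (d).
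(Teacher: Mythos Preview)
Your overall cycle $(1)\Rightarrow(2)\Rightarrow(3)\Rightarrow(1)$ and the reduction to generators appearing in some relation match the paper, and $(1)\Rightarrow(2)$ is handled identically via Proposition~\ref{dueck}. The substantive steps, however, are argued differently.

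For $(2)\Rightarrow(3)$, the paper does not compute reduced Gr\"obner bases case by case. It first uses a height argument in the domain $K[S]$: if $\supp(w_1)\cap\supp(w_3)\neq\emptyset$ and $\supp(w_2)\cap\supp(w_4)\neq\emptyset$, then the toric ideal is contained in a height-two monomial prime $(u_i,u_j)$, forcing equality and a contradiction. This single observation pins down the three possible overlap patterns (none, one, or two overlaps sharing a common word) up front, and the rest is direct combinatorics with the square-free hypothesis on $I_\prec$: one shows that candidate leading monomials, after passing to $\sqrt{\,\cdot\,}$, cannot be rewritten in $S$ unless the configuration is as in (d). Your plan would eventually reach the same conclusions, but the height argument is the organizing shortcut you are missing; without it the ``bookkeeping across all term orders'' you flag becomes genuinely unpleasant. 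Also, (3.a) is not quite automatic from cancellativity: the paper cancels the common factor and then argues the resulting relation must be a power of $w_3=w_4$, contradicting independence.

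For $(3)\Rightarrow(1)$, the paper does \emph{not} follow the template of Proposition~\ref{single}; no explicit embedding $V\subseteq\FaM_N$ is built. Instead it proceeds in three stages: (i) it reads off directly from (3.a)--(3.d) that $SS^{-1}$ is free abelian of rank $n-2$; (ii) it proves cancellativity by viewing $S$ as a quotient $(T\times\langle u\rangle)/\nu$ of a one-relator normal monoid $T$ times a free generator, and showing via a degree argument in the domain $K[T\times\langle u\rangle]$ that the image of $u$ (hence each $x_i\in\supp(w_4)$) is cancellable, which forces all of $S$ to embed in its localization; (iii) normality is then obtained exactly as you suggest in your second alternative, by writing $S$ as a finite union of free abelian submonoids with the same quotient group. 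This route completely sidesteps the ``main obstacle'' you anticipate: there is no need to design exponent vectors $v_i$ realizing both relations simultaneously. Your embedding strategy is not wrong in principle, but it is the harder path, and the paper's cancellability-by-degree argument is the idea that makes the interacting case tractable.
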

\begin{proof}
Note that $S = S_{1} \times S_{2}$, where $S_{2}$ is
the free abelian monoid generated by
$$\{u_{1},\ldots ,u_{n}\} \setminus
(\bigcup_{i=1}^{4} \supp(w_{i}))$$ and $$S_{1} = \langle
\bigcup_{i=1}^{4} \supp(w_{i})\rangle.$$ Since $S_{2}$ is a normal
positive monoid, it follows that $S$ is a normal positive monoid
if and only if $S_{1}$ is such a monoid, i.e. we may assume that
$\{u_{1},\ldots ,u_{n}\} = \bigcup_{i=1}^{4} \supp(w_{i})$.

It follows from Proposition~\ref{dueck} that (1) implies (2). We
now prove (2) implies (3). So assume that $I=I_{\prec}$ is a
square free ideal for some term order $\prec$  and $S$ is a
positive monoid. In order to prove (3.a) suppose for example that
$\supp (w_{1})\cap \supp (w_{2}) \neq \emptyset$. Then write
$w_{1}=uw_{1}', w_{2}=uw_{2}'$ for a nontrivial word $u$ and some
$w_{1}',w_{2}'$ such that $\supp(w_{1}')\cap
\supp(w_{2})'=\emptyset$. Hence, in $S$, we have $w_{1}'=w_{2}'$,
and thus each of $w_{1}',w_{2}'$ is divisible by some of the
$w_{j}$'s. So, by symmetry, we may assume that $w_{1}'=w_{3}z$ and
$w_{2}'=w_{4}y$. Let $m$ be the maximal positive integer such that
$w_{1}'=w_{3}^{m}z'$ and $w_{2}'=w_{4}'y'$ for some $z',y'$. Then
$z'=y'$ holds in $S$ and it follows that $z'=y'$ as words
(otherwise $y',z'$ would be again divisible by $w_{3},w_{4}$,
respectively, contradicting the choice of $m$). It follows that
the relation $w_{1}=w_{2}$ is a consequence of $w_{3}=w_{4}$, a
contradiction. So (3.a) follows.

In order to prove conditions (3.b),(3.c) and (3.d)
we introduce the following notation. For a word $w$
in $u_{1},\ldots ,u_{n}$ we define
$\sqrt{w}=x_{1}\cdots x_{p}$ where $\supp (w)=\{
x_{1},\ldots ,x_{p}\}$.

Note that if $\supp (w_{1})\cap \supp (w_{3}) \neq
\emptyset$ then we must have that $\supp (w_{2})\cap
\supp (w_{4}) = \emptyset$. Indeed, for otherwise,
the ideal $K[S](w_1-w_{2},\; w_{3}-w_{4}) \subseteq
K[S](u_{i},u_{j})$, for some $i\neq j$. Since both
ideals are  height two primes, they must be equal, a
contradiction (note that $S$ is, by assumption, a
positive monoid and thus $K[S]$ is a domain). If
$\supp (w_{2})\cap \supp (w_{3}) \neq \emptyset$
then, by the same reasoning, $\supp (w_{1})\cap
\supp (w_{4}) = \emptyset$. Hence we have shown that
either all $\supp (w_{i})$ are disjoint or
$\supp(w_{i})\cap \supp (w_{j})\neq \emptyset $ for
exactly one pair $i,j$ or this holds for exactly two
pairs and these pairs are of the form $i,j$ and
$i,m$ for some $i,j,m$. So, by symmetry, it is
enough to deal with the three cases considered
below.

If all $\supp (w_{i})$ are disjoint then let for example
$w_{2}\prec w_{1}$ and $w_{4}\prec w_{3}$. It  easily follows from
the assumption that $w_{1},w_{3}$ must be square free and hence
(3.b),(3.c) and (3.d) hold.

Next, assume that $\supp (w_{1}) \cap \supp
(w_{3})\neq \emptyset$ and $\supp (w_{i}) \cap \supp
(w_{j}) =\emptyset$ for every pair $(i,j)\neq
(1,3)$.

To prove (3.d) we need to show that $\ssupp
(w_{i})=\emptyset$ for $i=2$ or $i=4$. So, suppose
otherwise, that is, $w_{2},w_{4}$ are not square
free. Then $w_{1},w_{3}\in I$ and $w_{2}\prec w_{1},
w_{4}\prec w_{3}$  (because for example if
$w_{1}\prec w_{2}$ then $w_{2}\in I$, so $w_{2}\neq
\sqrt{w_{2}}\in I$, whence $\sqrt{w_{2}}$ is in a
nontrivial relation in $S$, but it cannot be
divisible by any of the words $w_{i}, i=1,2,3,4$, a
contradiction). Let $w_{k}=ww_{k}'$ for $k=1,3$,
where $\supp (w_{1}')\cap \supp (w_{3}')=\emptyset
$. Then $w_{3}w_{1}'=w_{1}w_{3}'$ as words and, in
$S$, we have $w_{3}w_{1}'=w_{4}w_{1}'$ and
$w_{1}w_{3}'=w_{2}w_{3}'$. So one of the words
$w_{4}w_{1}',w_{2}w_{3}'$ is in $I$. Therefore
$\sqrt{w_{4}}w_{1}'\in I$ or $\sqrt{w_{2}}w_{3}'\in
I$. Say, for example, that the former holds. Then
$\sqrt{w_{4}}w_{1}'$ is in a nontrivial relation in
$S$.  But it is easy to see that
$\sqrt{w_{4}}w_{1}'$ cannot have $w_{i}$ as a
subword for every $i=1,2,3,4$. This contradiction
establishes assertion (3.d).

To prove (3.b) and (3.c) in this case, suppose for example that
$\ssupp(w_{2})=\emptyset$ and $\ssupp (w_{3})\neq \emptyset \neq
\ssupp (w_{4})$.  An argument as before shows that $w_{4}\prec
w_{3}\in I$ and $w_{4}\not \in I$. Hence $w_{3}\neq
\sqrt{w_{3}}\in I$. Then $\sqrt{w_{3}}=w_{1}x$ for a  word $x$.
The only relation in which $w_{1}x$ can occur must be of the form
$w_{1}x=w_{2}x$, whence we have $w_{2}\prec w_{1}$. Write
$w_{3}=v_{1}v_{3}$ where $\supp(v_{1})=\supp(w_{1})$ and
$\supp(v_{3})\cap \supp(w_{1})=\emptyset$. Let $k\geq 1$ be
minimal such that $w_{3}$ divides $w_{1}^{k}v_{3}$. Then
$w_{1}^{k}v_{3}=w_{3}y$ for a subword $y$ of $w_{1}$ such that
$y\neq w_{1}$. So, in $S$, we get
$w_{2}^{k}v_{3}=w_{1}^{k}v_{3}=w_{3}y=w_{4}y$. Since the word
$w_{4}y$ is not divisible by $w_{1},w_{2},w_{3}$ and $w_{4}\neq
\sqrt{w_{4}}$, it follows that $\sqrt{w_{4}y}\not \in I$, whence
$w_{2}^{k}v_{3}\in I$. Then $w_{2}v_{3}\in I$. But the only
relation containing this word is $w_{2}v_{3}=w_{1}v_{3}$. Since
$w_{2}v_{3}\prec w_{1}v_{3}$, we get a contradiction. We have
shown that (3.b),(3.c) are satisfied.

Finally, consider the case where there are at
exactly two overlaps between the supports of $w_{i},
i=1,2,3,4$. We may assume that $\supp (w_{1})\cap
\supp(w_{3})\neq \emptyset $ and $\supp (w_{2})\cap
\supp(w_{3})\neq \emptyset $. So $\supp (w_{4})\cap
\supp (w_{j})=\emptyset $ for every $j\neq 4$.

Suppose that $\ssupp(w_{4})\neq \emptyset$. Let $w_{1}=ab,\;
w_{2}=cd, \; w_{3}=a'c'e$, where $\supp(a)=\supp(a'), \;
\supp(c)=\supp(c')$ and the remaining factors have pairwise
disjoint supports. Let $a_{0},a_{0}'$ be words of minimal length
such that $aa_{0}=a'a_{0}'$. Clearly, $a_{0}'$ is not divisible by
$a$ and $\supp(a_{0}')\cap \supp(a_{0})=\emptyset$.

Now $abc'e=cdc'e$ in $S$ and $a'c'eb=w_{4}b$ in $S$. So
$cdc'ea_{0}=w_{4}ba_{0}'$ in $S$ and hence one of these words is
in $I$. If $w_{4}ba_{0}'$ is in $I$ then $\sqrt{w_{4}ba_{0}'}\in
I$, which is not possible because $\sqrt{w_{4}ba_{0}'}$ cannot be
rewritten in $S$ (as $\sqrt{w_{4}}$ is a proper subword of $w_{4}$
with support independent of $w_{1},w_{2},w_{3}$ and $ba_{0}'$ is
not divisible by any of $w_{1},w_{2},w_{3}$). Hence $cdc'ea_{0}\in
I$. Then $cdea_{0}\in I$ because $I$ is square free. But the only
way to rewrite $cdea_{0}$ in $S$ is $cdea_{0} =abea_{0}$. Hence
$abea_{0}\prec cdea_{0}$, so also $w_{1}=ab\prec cd=w_{2}$.
However, repeating the above argument with the roles of $w_{1}$
and $w_{2}$ switched, we also get $w_{2}\prec w_{1}$, a
contradiction. We have proved that $w_{4}$ is square free, so
(3.d) holds, and (3.c) also holds.

It remains to prove condition (3.b). Suppose that
$w_{1},w_{2}$ are not square free. By symmetry, we
may assume that $w_{2}\in I$. Then $\sqrt{w_{1}}\in
I$ and in particular the word $\sqrt{w_{1}}$ it must
be divisible by $w_{3}$. But $\supp(w_{2})\cap
\supp(w_{3})\neq \emptyset$ by the assumption, so
$\supp(w_{2})\cap \supp(w_{1})\neq \emptyset$, a
contradiction. This completes the proof of the fact
that (3) is a consequence of (2).

Now we prove (3) implies (1).  So, suppose that
the four properties (3.a)-(3.d) hold. We claim
that if $S$ is embedded in a group then the group
$SS^{-1}$ is torsion free,  and thus $S$ is a
positive affine semigroup. Note that in this
case, $SS^{-1}$ actually is a free abelian group
of rank $n-2$. Indeed, because of the assumptions
there exists $u_{i}$ and $\epsilon \in \{ 1,2\}$
so that $u_{i}\in \supp (w_{\epsilon})$ and
$\ssupp (w_{\epsilon})=\emptyset$. Re-numbering
the generators, if necessary, we may assume that
$i=1$. Then the relation $w_{1}=w_{2}$ implies
that $u_{1}=wv^{-1}$ for some $w,v\in S$ with
$\supp (w)\cup \supp (v) \cup \{ u_{1} \} =\supp
(w_{1})\cup \supp (w_{2})$, $u_{1}\not\in \supp
(w)\cup \supp (v)$ and $\supp (w)\cap \supp
(v)=\emptyset$. It follows that $$SS^{-1}=\gr (
u_{2},\ldots , u_{n}\mid
w_{3}(wv^{-1},u_{2},\ldots , u_{n})
=w_{4}(wv^{-1},u_{2},\ldots , u_{n})) .$$ If the
second property of (3.d)  holds then $\supp
(w_{4})\cap (\bigcup_{i=1}^{3} \supp (w_{i}))
=\emptyset$ and $\ssupp (w_{4}) =\emptyset$. So,
in particular, $u_{1}\not\in \supp (w_{4})$ and
for $u_{k}\in \supp (w_{4})$ we have that
$u_{k}\not\in \supp (w)\cup \supp (v) \cup \sup
(w_{3})$ and $$u_{k} = w_{3}
(wv^{-1},u_{2},\ldots , u_{n}) u^{-1}$$ with
$w_{4}=uu_{k}$ and $\supp (w_{4})=\supp (u) \cup
\{ u_{k}\}$. Hence we obtain that $SS^{-1}=\gr
(\{ u_{2},\ldots , u_{n}\} \setminus \{ u_{k}\}
)$ and this is a free abelian group of rank
$n-2$, as claimed. If, on the other hand, the
first property of (3.d)  holds then, without loss
of generality, we may assume that $\supp
(w_{1})\cap \supp (w_{3})\neq \emptyset$, $\ssupp
(w_{2})= \emptyset$  and $u_{1}\in \supp
(w_{2})$. So, $u_{1}\not\in \supp (w_{3})$. If
$\ssupp (w_{3})=\emptyset$ then choose $u_{k}\in
\supp (w_{3})$ and write $w_{3}=u_{k}v'$ with
$u_{k}\not\in \supp (v')$ and $\supp (w_{3})=\{
u_{k}\} \cup \supp (v')$. So
$u_{k}=w_{4}(v')^{-1}$. Note that $u_{1}\not\in
\supp (w_{4})\cup \supp (v')$. It follows that
$SS^{-1}=\gr (\{ u_{2},\ldots ,u_{n}\} \setminus
\{ u_{k}\})$, a free abelian group of rank $n-2$.
Finally, if $\ssupp (w_{3})\neq \emptyset$ then
$\ssupp (w_{4})=\emptyset$. In this case write
$w_{4}=u_{l}v''$ for some $v''$ with
$u_{l}\not\in \supp (v'')$ and $\supp (w_{4})=\{
u_{k} \} \cup \supp (v'')$. It follows that
$SS^{-1} =\gr (\{ u_{2},\ldots , u_{n}\}
\setminus \{ u_{l}\})$,  again a free abelian
group of rank $n-2$, as desired.

So now we show that $S$ is cancellative and thus
embedded in $\Fa_{n-2}$. By symmetry we can assume
that $\ssupp(w_{4}) = \emptyset$. Then write $$w_{2}
= y_{1}^{\gamma_{1}}\cdots y_{q}^{\gamma_{q}},
 \quad w_{4} = x_{1}\cdots x_{p-1}x_{p},$$ $\gamma_{i} \geq
1$, where $x_{1},\ldots ,x_{p},y_{1},\ldots
,y_{q}\in \{ u_{1},\ldots , u_{n}\}$, and
$\supp(w_{4})$ does not intersect nontrivially the
support of any other word in the defining relations.

Let $F$ be the  free abelian monoid with basis
$\supp(w_{1}) \cup
 \{y_{1},\ldots, y_{q}\} \cup \supp(w_{3}) \cup \{x_{1},\ldots ,x_{p-1}\} $.
Then let $T = F / \rho$, where $\rho$ is the congruence defined by
the relation $w_{1} = w_{2}$. Since $\ssupp(w_{1}) = \emptyset$ or
$\ssupp(w_{2}) = \emptyset$, we know from
Proposition~\ref{1relation} that $T$ is a normal positive monoid.
In particular, $TT^{-1}$ is a torsion free group. Consider the
semigroup morphism
 $$f: T \times \langle u \rangle
 \longrightarrow TT^{-1}$$
defined by $f(t) = t$, for $t\in T$ and
$f(u)=w_{3}z^{-1}$ and $z = x_{1}\cdots x_{p-1}$.
Note that $f(w_{3}) = f(zu)$. Hence the above
morphism induces the following natural morphisms
 $$T \times \langle u \rangle \stackrel{\pi}{\longrightarrow} (T \times
 \langle u \rangle ) / \nu \stackrel{\overline{f}}{\longrightarrow} TT^{-1}
 ,$$
with $\nu$ the congruence defined by the relation
 $w_{3} = zu$.
Put $M = (T \times \langle u \rangle )/ \nu$ and
note that $$M \cong S.$$ For simplicity we denote
$\pi(t)$ as $\overline{t}$, for $t \in T\times
\langle u \rangle$. We note that $\pi_{\mid_{T}}$,
the restriction of $\pi$ to $T$, is injective.
Indeed, suppose $s,t \in T$ are such that $\pi(s) =
\pi(t)$. Then $$s - t \in K[T \times \langle u
\rangle] (zu - w_{3}),$$ an ideal in $K[T \times
\langle u \rangle]$. So, $s - t =\alpha (zu -
w_{3})$, for some $\alpha \in K[T \times \langle u
\rangle]$. Now $K[T \times \langle u \rangle]$ has a
natural $\N$-gradation, with respect to the degree
in $u$. Clearly, $s - t$ and $w_{3}$ have degree
zero. Let $\alpha_{h}$ be the highest degree term of
$\alpha$ with respect to this gradation. Then, $$0 =
\alpha_{h} z u.$$ Since $T \times \langle u \rangle$
is contained in a torsion free group, we know that
$K[T \times \langle u \rangle]$ is a domain. So we
get that $\alpha_{h} = 0$ and thus $\alpha = 0$.
Hence $s = t$ and therefore indeed $\pi_{\mid_{T}}$
is injective. So we will identify the element
$\pi(t)$ with $t$, for $t \in T$.

Next we note that $\overline{u}$ is a cancellable
element in $M$. Indeed, let $\overline{x},
\overline{y} \in M$ and suppose $\overline{u} \
\overline{x} = \overline{u} \ \overline{y}$. This
means that $$ux-uy \in K[T \times \langle u \rangle]
(uz - w_{3}),$$ i.e.
\begin{eqnarray}\label{!}
ux - uy & = & \alpha(uz - w_{3})
\end{eqnarray}
for some $\alpha \in K[T \times \langle u \rangle]$,
where $x,y\in T\times \langle u \rangle $ are
inverse images of $\overline{x}, \overline{y}$.
Again consider the $\N$-gradation on $K[T \times
\langle u \rangle]$ via the degree in $u$. Let
$\alpha_{0}$ be the zero degree component of
$\alpha$. Then it follows that $$0 =
\alpha_{0}w_{3}.$$ Hence $\alpha_{0} = 0$, as $K[T]$
is a domain, and thus $$\alpha \in K[T\times \langle
u \rangle] u.$$ Using again that $K[T\times \langle
u \rangle]$ is a domain, we get from (\ref{!}) that
$$x - y \in K[T \times \langle u \rangle] (uz -
w_{3}).$$ Hence $\overline{x} = \overline{y} \in M$,
as desired.

In the above we thus have shown that $\overline{u}$
is cancellable in $M$. Hence $x_{p}$ is cancellable
in $S$. The argument of the proof holds for all
elements $x_{1},\ldots , x_{p}$. So, all elements
$x_{1},\ldots ,x_{p}$ are cancellable in $S$. By  a
similar argument, if $\ssupp(w_{2}) = \emptyset$,
this also holds for all elements $y_{i} \in
\supp(w_{2}) \setminus \supp(w_{3})$.

On the other hand, if $\ssupp(w_{2}) \neq \emptyset$
and thus $\ssupp(w_{1}) = \emptyset$, then similarly
one shows that $u_{i}$ is cancellable in $S$, for
every $u_{i} \in \supp(w_{1}) \setminus
\supp(w_{3})$. Clearly, $S$ is contained in its
localization $S_{C}$, with respect to the
multiplicatively closed set of the cancellable
elements. In view of the form of the defining
relations of $S$, this implies that $S_{C}$ is a
group. So $S$ is a cancellative monoid in $SS^{-1} =
\Fa_{n-2}$.

Finally, we show that $S$ is normal, by proving it is a union of
finitely many finitely generated free abelian monoids. To so, note
that conditions (3.a)-(3.d) imply that $\ssupp(w_{i})=\emptyset$
and $\ssupp(w_{j})=\emptyset$ for some $i\in \{ 1,2\}$ and $j\in
\{ 3,4\}$. Furthermore, $\supp(w_{i})\cap \supp (w_{l})=\emptyset$
for all $l$ with $l\neq i$, or $\supp (w_{j})\cap \supp
(w_{l})=\emptyset$ for all $l$ with $l\neq j$. Without loss of
generality we may assume the former holds.  Note that if $w_{k} =
u_{q}$ for some $k$ and some $q$ then the assertion follows from
Proposition~\ref{1relation}. Hence, without loss of generality, we
may assume that $|\supp (w_{k})|>1$ for $k=1,2,3,4$.

Because $\ssupp(w_{i})=\emptyset$, it is easily
seen, using the relation involving $w_{i}$, that
$s$ can be written as a product of elements of
$\{ u_{1}, \ldots , u_{n}\} \setminus \{u\}$ for
some $u\in \supp (w_{i})$. If not all elements of
$\supp (w_{j})$ occur in this product of $s$,
then $s\in \langle \{ u_{1},\ldots , u_{n}\}
\setminus \{ u,v\}\rangle$, with $v\in \supp
(w_{j})$. Now because of the defining relations
one easily sees that $\langle \{ u_{1}, \ldots ,
u_{n}\} \setminus \{ u,v\} \rangle $ is a free
abelian monoid, as desired. If, on the other
hand, all elements of $\supp(w_{j})$ occur in the
expression of $s$ then, using the relation
involving $w_{j}$ (several times if needed) and
using the fact that $\supp (w_{i})\cap \supp
(w_{l})=\emptyset$ for all $l\neq i$, we can
reduce to the previous case. This ends the proof.
\end{proof}

As a matter of example, it follows at once from
Theorem~\ref{2relations} that the commutative
algebra $K\langle u_{1},u_{2},u_{3},u_{4},u_{5}
\mid u_{1}u_{2} = u_{3}^{2}, \; u_{1}u_{3} =
u_{4}u_{5}\rangle$ is a normal domain, while the
commutative algebra  $K\langle
u_{1},u_{2},u_{3},u_{4} \mid u_{1}u_{2} =
u_{3}^{2}, \; u_{1}u_{3} = u_{4}^{2}\rangle$ is a
domain that is not normal.

Finally, we describe the class group of positive monoid defined by
two relations. We use the same notation as in the proof of
Theorem~\ref{2relations}. If $(\supp (w_{1})\cup \supp (w_{2}))
\cap (\supp (w_{3})\cup \supp (w_{4}))=\emptyset$ then $S\cong
S_{1}\times S_{2}$, with $S_{1}=\langle \supp (w_{1})\cup
\supp(w_{2}) \mid w_{1}=w_{2}\rangle$ and $S_{2}=\langle \supp
(w_{3})\cup \supp(w_{4}) \mid w_{3}=w_{4}\rangle$. Clearly, in
this case, $\Cl(S) \cong \Cl(S_{1})\times \Cl(S_{2})$, and the
result follows from Theorem~\ref{1main}. So, assume $S$ satisfies
one of the properties in condition (3.d) in
Theorem~\ref{2relations}. Then, we can write $$S = \langle
u_{1},\ldots,u_{n},\ldots , u_{m}\rangle$$ with relations
\begin{eqnarray*}
u_{1}\cdots u_{k_{1}}u_{k_{2}+1}\cdots u_{k_{3}}&=&
u_{k_{1}+1}^{a_{k_{1}+1}}\cdots
u_{k_{2}}^{a_{k_{2}}}u_{k_{3}+1}^{a_{k_{3}+1}}\cdots
u_{k_{4}}^{a_{k_{4}}}\\ u_{1}^{a_{1}}\cdots
u_{k_{1}}^{a_{k_{1}}}u_{k_{1}+1}^{b_{k_{1}+1}}\cdots
u_{k_{2}}^{b_{k_{2}}}u_{k_{4}+1}^{a_{k_{4}+1}}\cdots
u_{k_{5}}^{a_{k_{5}}} &=& u_{k_{5}+1}\cdots u_{n},
\end{eqnarray*}
with $0<k_{1} \leq k_{2} \leq k_{3} \leq k_{4} \leq k_{5} < n \leq
m$ and all $a_{i},b_{j}\geq 1$ and (we agree that if
$k_{1}=k_{2}$, $k_{2}=k_{3}$, $k_{3}=k_{4}$ or $k_{4}=k_{5}$ then
the factors $u_{k_{1}+1}^{a_{k_{1}+1}} \cdots
u_{k_{2}}^{a_{k_{2}}}$, $u_{k_{1}+1}^{b_{k_{1}+1}} \cdots
u_{k_{2}}^{b_{k_{2}}}$, $u_{k_{2}+1}\cdots u_{k_{3}}$,
$u_{k_{3}+1}^{a_{k_{3}+1}}\cdots u_{k_{4}}^{a_{k_{4}}}$, or
$u_{k_{4}+1}^{a_{k_{4}+1}}\cdots u_{k_{5}}^{a_{k_{5}}}$ are the
empty words). So, the two cases discussed in condition (3.d) of
Theorem~\ref{2relations} correspond to $k_{1}=k_{2}$ and
$k_{1}<k_{2}$, respectively.

As in the previous section, in order to compute the class group,
we also may assume that $n=m$. Moreover, we may assume that
$w_{i}\not \in \{ u_{1},\ldots ,u_{n}\}$ for $i=1,2,3,4$, as
otherwise $S$ can be presented by a single relation and then the
class group is given in Theorem~\ref{1main}. Under this
restriction, in the next lemma, we describe the principal ideals
as divisorial products of minimal prime ideals. Note that there
are two possible types of minimal primes in $S$. First, there are
$$Q = (u_{i},u_{j}),$$ where $u_{i}$ and $u_{j}$ each belong to
the support of different sides of one of the defining relations
and do not belong to the supports of the words in the other
relation. To prove that $Q$ is a prime ideal we may assume, by
symmetry, that $u_{i},u_{j}\in \supp (w_{1})\cup \supp (w_{2})$.
Clearly, $S/Q$ is then generated by the natural images of the
elements $u_{q}, q\neq i,j$, subject to the unique relation
$w_{3}=w_{4}$. Since $u_{i},u_{j}\not \in \supp (w_{3})\cup \supp
(w_{4})$, it is easily seen that $(S/Q )\setminus \{ 0\}$ is a
multiplicatively closed set, as desired. Second, there are minimal
primes of the form
 $$Q = (u_{i},u_{j},u_{k}),$$
where $u_{i}$ belongs to the support of a word in each of the two
relations, $u_{j}$ and $u_{k}$ belong to the support of a word in
a defining relation but on a different side than $u_{i}$, and
furthermore $u_{j}$ and $u_{k}$ are involved in different
relations.  In particular, $j\neq k$. Clearly, existence (and the
number) of minimal primes of the latter type depends on the
existence of strict inequalities $k_{i}<k_{i+1}$.

The formulas obtained in the following Lemma~\ref{prin2} should be
interpreted in such a way that principal ideals $Su_{w}$ and
primes $P_{y,z}$ or $P_{t,v,x}$ are deleted if some index does not
occur in the defining relations. So, for example $P_{y,k_{3}+1}$
is not defined and hence ignored if $k_{3}=k_{4}$.

\begin{lemma}\label{prin2}
Let
\begin{eqnarray*} S = \langle u_{1},\ldots,u_{n}
&\mid& u_{1}\cdots u_{k_{1}}u_{k_{2}+1}\cdots
u_{k_{3}}= u_{k_{1}+1}^{a_{k_{1}+1}}\cdots
u_{k_{2}}^{a_{k_{2}}}u_{k_{3}+1}^{a_{k_{3}+1}}\cdots
u_{k_{4}}^{a_{k_{4}}}\\ && u_{1}^{a_{1}}\cdots
u_{k_{1}}^{a_{k_{1}}}u_{k_{1}+1}^{b_{k_{1}+1}}\cdots
u_{k_{2}}^{b_{k_{2}}}u_{k_{4}+1}^{a_{k_{4}+1}}\cdots
u_{k_{5}}^{a_{k_{5}}} = u_{k_{5}+1}\cdots
u_{n}\rangle ,
\end{eqnarray*}
with  $0<k_{1} \leq k_{2} \leq k_{3} \leq k_{4} \leq k_{5} < n$
and all $a_{i},b_{j}\geq 1$, be a normal monoid that cannot be
presented with a single relation. Put $P_{y,z}$, the  minimal
prime ideal of $S$ generated by $\{u_{y},u_{z}\}$, $y \in
\{k_{2}+1,\ldots , k_{3}\}$, $z\in \{k_{3}+1, \ldots , k_{4}\}$ or
$y \in \{k_{4}+1,\ldots , k_{5}\}$, $z\in \{k_{5}+1, \ldots , n\}$
and put $P_{t,v,x}$, the minimal prime ideal of $S$ that is
generated by $\{ u_{t},u_{v},u_{x}\}$, $t \in \{1,\ldots ,
k_{1}\}$, $v \in \{k_{1}+1,\ldots , k_{2}, k_{3}+1, \ldots
,k_{4}\}$, $x \in \{k_{5}+1,\ldots , n\}$ or $t \in
\{k_{2}+1,\ldots , k_{3}\}$, $v \in \{k_{1}+1,\ldots , k_{2}\}$,
$x \in \{k_{5}+1,\ldots , n\}$. Then
\begin{enumerate}
\item
$Su_{w}  =
\left(\prod_{l=k_{5}+1}^{n}\left(\prod_{m=1}^{k_{1}}
P_{m,w,l} \;
\prod_{m=k_{2}+1}^{k_{3}}P_{m,w,l}\right)\right)^{*}$,
for  $w\in \{k_{1}+1  , \ldots , \\ k_{2} \}$,
\item
$
Su_{w} =
\left(\prod_{l=k_{5}+1}^{n}\left(\prod_{m=1}^{k_{1}}
P_{m,w,l}\right) \right)^{*} \; *\; \left(
\prod_{m=k_{2}+1}^{k_{3}}P_{m,w}\right)^{*}$, for
$w\in \{k_{3}+1, \ldots , k_{4}\}$,
\item
$Su_{w} = \left(\prod_{l=k_{5}+1}^{n}
P_{w,l}\right)^{*}$,  for $w\in \{k_{4}+1, \ldots ,
k_{5}\}$,
\item
$
Su_{w} =
\left(\prod_{l=k_{5}+1}^{n}\left(\prod_{m=k_{1}+1}^{k_{2}}
P_{w,m,l}^{a_{m}} \;
\prod_{m=k_{3}+1}^{k_{4}}P_{w,m,l}^{a_{m}}\right)\right)^{*}$,
for $w \in \{1,\ldots , \\k_{1}\}$,
\item
$
Su_{w} =
\left(\prod_{l=k_{5}+1}^{n}\left(\prod_{m=k_{1}+1}^{k_{2}}
P_{w,m,l}^{a_{m}}\right) \right)^{*} \; * \; \left(
\prod_{m=k_{3}+1}^{k_{4}}P_{w,m}^{a_{m}}\right)^{*}$,
for $w \in \{k_{2}+1,\ldots , k_{3}\}$,
\item
$
Su_{w} = \left(
\prod_{l=1}^{k_{1}}\left(\prod_{m=k_{1}+1}^{k_{2}}
P_{l,m,w}^{a_{m}} \;
\prod_{m=k_{3}+1}^{k_{4}}P_{l,m,w}^{a_{m}}\right)^{a_{l}}\right)^{*}
\ast\\ \quad
\left(\prod_{m=k_{1}+1}^{k_{2}}\left(\prod_{l=1}^{k_{1}}P_{l,m,w}
\;\prod_{l=k_{2}+1}^{k_{3}}P_{l,m,w}\right)^{b_{m}}\right)^{*}
\; *\;  \left( \prod_{l=k_{4}+1}^{k_{5}}
P_{l,w}^{a_{l}}\right)^{*}$,\\ for $w \in
\{k_{5}+1,\ldots , n\}$,
\end{enumerate}
\end{lemma}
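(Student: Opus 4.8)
The plan is to apply the divisorial criterion recalled just before Lemma~\ref{prin}: since $S$ is normal, for any ideal $I$ of $S$ one has $I=(\prod_{P}P^{n(P)})^{*}$ precisely when $S_{P}I=S_{P}P^{n(P)}$ for every minimal prime $P$, where all $n(P)\geq 0$ and $n(P)>0$ if and only if $I\subseteq P$. I apply this with $I=Su_{w}$. The computation then splits into two largely independent tasks: first, determining for which minimal primes $P$ one has $u_{w}\in P$ (equivalently $Su_{w}\subseteq P$, so that $P$ actually occurs in the product); and second, computing the exponent $n(P)$ for each such $P$ by localizing at it.

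For the first task I would simply read off, from the ranges $0<k_{1}\leq\cdots\leq k_{5}<n$, in which of the two defining relations and on which side each generator $u_{w}$ occurs, and match this against the definitions of the primes $P_{y,z}$ and $P_{t,v,x}$: a minimal prime contains $u_{w}$ exactly when $w$ is one of its generating indices. Using the complete list of minimal primes established in the discussion preceding the lemma, this purely combinatorial bookkeeping reproduces exactly the index sets appearing in the products of each of the six formulas, and the stated convention deletes a prime whenever a degenerate equality $k_{i}=k_{i+1}$ makes its index range empty.

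For the second task, fix such a $P$ and put $T=S_{P}$. Since $S$ is normal and $P$ is a minimal prime, $T$ is a discrete valuation monoid (as recalled before Lemma~\ref{prin}); let $\nu$ be its valuation. Every generator $u_{i}$ whose index is not among the two or three indices generating $P$ becomes invertible in $T$, so after discarding units the two defining relations of $S$ collapse to relations involving only the generators of $P$. The key point is that these collapsed relations always allow one to express the remaining generators as powers of a single one of them, which is therefore a uniformizer of $T$; reading $\nu(u_{w})$ off the collapsed relations then yields $n(P)$. For instance, when $P=P_{l,m,w}$ with $l\in\{1,\ldots,k_{1}\}$, $m\in\{k_{1}+1,\ldots,k_{2}\}$ and $w\in\{k_{5}+1,\ldots,n\}$, the first relation reduces to $u_{l}=u_{m}^{a_{m}}$ and the second to $u_{l}^{a_{l}}u_{m}^{b_{m}}=u_{w}$ modulo units; substituting gives $u_{m}^{a_{l}a_{m}+b_{m}}=u_{w}$, so $u_{m}$ is a uniformizer and $\nu(u_{w})=a_{l}a_{m}+b_{m}$, exactly the exponent of $P_{l,m,w}$ in formula~(6). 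In the cases where $u_{w}$ itself turns out to be the innermost generator (for example in formulas~(1) and~(3)) the exponent is simply $1$.

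The main obstacle is the bookkeeping in the second task rather than any conceptual difficulty: one must carry out the substitution for every type of generator $u_{w}$ against every type of minimal prime lying over it, and in particular identify correctly which generator of $P$ serves as the uniformizer. For the height-three primes $P_{t,v,x}$ this forces one to use both relations simultaneously---one relation eliminates one generator in favour of a second, and the second relation then expresses the third generator in terms of that second one---and it is precisely this two-step elimination that produces the compound exponents such as $a_{l}a_{m}+b_{m}$. Finally one checks consistency with the degenerate conventions $k_{i}=k_{i+1}$, under which the corresponding relation factor and the associated primes drop out simultaneously, leaving the remaining formulas intact.
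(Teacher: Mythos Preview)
Your proposal is correct and follows essentially the same approach as the paper: identify for each $w$ the minimal primes containing $u_{w}$, then compute the exponents by localizing and collapsing the two defining relations, exactly in the spirit of Lemma~\ref{prin}. The paper's proof is in fact just a two-line sketch referring back to that lemma, so your version is simply a more explicit rendering of the same argument, including the helpful worked example for the compound exponent $a_{l}a_{m}+b_{m}$ in case~(6).
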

\begin{proof}
For $w\in \{ 1,\ldots, n\}$, one notices that in the expressions
for $S_{w}$, in the statement of the lemma, precisely all the
minimal primes $P$ occur that contain $u_{w}$. Using the defining
relations one then easily verifies, as in the proof of
Lemma~\ref{prin}, that the proposed formulae hold in the
localizations $S_{P}$. Hence the result follows.
\end{proof}

Our next aim is to describe the class group of $S$.
Surprisingly, the proof is obtained by a reduction
to the case considered in Theorem~\ref{1main}. The
definitions of  $d_{1}$ and $d_{2}$ in the following
result should again be interpreted in the correct
way when some $k_{i}=k_{i+1}$. We agree to ignore
all $a_{i}$ (respectively, $b_{j}$) for which
$u_{i}$ (respectively $u_{j}$) does not occur in the
defining relations.

\begin{theorem}\label{2main}
Let
\begin{eqnarray*} S = \langle u_{1},\ldots,u_{n},\ldots , u_{m} \mid
u_{1}\cdots u_{k_{1}}u_{k_{2}+1}\cdots u_{k_{3}}&=&
u_{k_{1}+1}^{a_{k_{1}+1}}\cdots
u_{k_{2}}^{a_{k_{2}}}u_{k_{3}+1}^{a_{k_{3}+1}}\cdots
u_{k_{4}}^{a_{k_{4}}}\\ u_{1}^{a_{1}}\cdots
u_{k_{1}}^{a_{k_{1}}}u_{k_{1}+1}^{b_{k_{1}+1}}\cdots
u_{k_{2}}^{b_{k_{2}}}u_{k_{4}+1}^{a_{k_{4}+1}}\cdots
u_{k_{5}}^{a_{k_{5}}} &=& u_{k_{5}+1}\cdots
u_{n}\rangle
\end{eqnarray*}
(with $0<k_{1} \leq k_{2} \leq k_{3} \leq k_{4} \leq k_{5} < n\leq
m$ and all $a_{i},b_{j}\geq 1$)
 be a normal positive monoid that does not admit a presentation with a
 single defining relation.  Let
 $Q=\{a_{t}a_{v}+b_{v}  \mid t\in \{ 1,\ldots, k_{1} \}, v\in
\{ k_{1}+1,\ldots ,k_{2}\} \} \cup \{ a_{t}a_{v}  \mid t\in \{
1,\ldots, k_{1} \}, v\in \{ k_{3}+1,\ldots ,k_{4}\} \} \cup\{
a_{y} \mid y\in \{ k_{4}+1,\ldots ,k_{5} \} \}$.
 Then $$\Cl(K[S]) \cong \Cl(S)
\cong \Z^{f}\times
(\Z_{d_{1}})^{k_{1}+k_{3}-k_{2}-1} \times
(\Z_{d_{2}})^{n-k_{5}-1},$$ where
\begin{eqnarray*}
f &=& (k_{3} - k_{2})(k_{4}-k_{3})+ (k_{5} -
k_{4})(n - k_{5}) + k_{1}(k_{4} - k_{3} +
k_{2}-k_{1})(n-k_{5})\\ && + \; (k_{3} -
k_{2})(k_{2}-k_{1})(n-k_{5}) -(n-2),
\end{eqnarray*} with $$
d_{1} = \gcd(a_{k_{1}+1},
\ldots , a_{k_{2}}, a_{k_{3}+1}, \ldots , a_{k_{4}})$$ and
$$d_{2} = \left\{ \begin{array}{ll}
 \gcd(a_{1}d_{1}, \ldots , a_{k_{1}}d_{1}, b_{k_{1}+1}, \ldots ,
b_{k_{2}}, a_{k_{4}+1}, \ldots , a_{k_{5}}) &  \mbox{ if } \,
k_{2}<k_{3} \\
 \gcd( q \mid q\in Q) & \mbox{ if } \, k_{2}=k_{3} \end{array} \right. .$$
\end{theorem}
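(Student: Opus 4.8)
The plan is to follow the strategy of Theorem~\ref{1main}: realize $\Cl(S)=D(S)/P(S)$ as the cokernel of an explicit integer matrix and compute its Smith normal form. Since $S=S'\times\FaM_{m-n}$ with the second factor free, the free factor contributes nothing to the class group, so I first reduce to $n=m$; the case where $S$ is presentable by a single relation is already covered by Theorem~\ref{1main} and is excluded by hypothesis. By the divisor theory recalled before Lemma~\ref{prin}, $D(S)$ is free abelian on the set of minimal primes, and $P(S)=\gr(Su_1,\dots,Su_n)$. Counting the two families $P_{y,z}$ and $P_{t,v,x}$ of Lemma~\ref{prin2} gives the number of minimal primes $N=(k_3-k_2)(k_4-k_3)+(k_5-k_4)(n-k_5)+k_1(k_4-k_3+k_2-k_1)(n-k_5)+(k_3-k_2)(k_2-k_1)(n-k_5)$, so $D(S)\cong\Z^N$. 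Because $S$ is positive, the surjection $SS^{-1}\to P(S)$, $g\mapsto Sg$, has kernel $\U(S)=\{1\}$, and the proof of Theorem~\ref{2relations} shows $SS^{-1}\cong\Fa_{n-2}$; hence $P(S)\cong\Z^{n-2}$ and the torsion-free rank of $\Cl(S)$ is exactly $N-(n-2)=f$.

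Next I would assemble the $N\times n$ presentation matrix $M$ of the inclusion $P(S)\hookrightarrow D(S)$, whose column indexed by $w$ records $Su_w$ as a vector in the minimal primes, read off directly from the six formulas of Lemma~\ref{prin2}. The rows split into blocks indexed by the two prime families, and the columns into the six generator types $\{1,\dots,k_1\},\{k_1+1,\dots,k_2\},\dots,\{k_5+1,\dots,n\}$. The crux is a normal-form reduction that, surprisingly, decouples $M$ into two independent subproblems of exactly the shape solved in Theorem~\ref{1main}: one governed by the first defining relation, contributing the invariant $d_1=\gcd(a_{k_1+1},\dots,a_{k_2},a_{k_3+1},\dots,a_{k_4})$ with multiplicity $(k_1+k_3-k_2)-1$ equal to the number of exponent-one generators on the left of relation~$1$ minus one, and one governed by the second relation, contributing $d_2$ with multiplicity $(n-k_5)-1$, the number of exponent-one generators on the right of relation~$2$ minus one.

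I expect the decoupling to be the main obstacle. The difficulty is that the triply-indexed primes $P_{t,v,x}$ are shared between the two relations, so the row blocks of $M$ are not literally independent. The plan is to exploit the identity sub-blocks coming from the exponent-one generators, exactly as in the passage from $M$ to $M'$ in the proof of Theorem~\ref{1main} where subtracting the last row block produced a $(0,I)$ shape: these identity blocks let me clear all cross-terms by integral row operations, after which $M$ becomes block-diagonal up to unimodular equivalence. Each diagonal block is then an instance of the matrix of Theorem~\ref{1main}, whose normal form contributes a single power of a gcd, so the whole computation reduces to two applications of that earlier result together with the free-rank bookkeeping already done.

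The remaining subtlety is the case distinction in $d_2$. When $k_2<k_3$, reducing the first block scales the contribution of the shared generators $u_1,\dots,u_{k_1}$ by $d_1$, so that in the second block they enter with effective weights $a_id_1$ rather than $a_i$; combined with the weights $b_{k_1+1},\dots,b_{k_2}$ and $a_{k_4+1},\dots,a_{k_5}$ of the other generators of relation~$2$, this yields $d_2=\gcd(a_1d_1,\dots,a_{k_1}d_1,b_{k_1+1},\dots,b_{k_2},a_{k_4+1},\dots,a_{k_5})$. When $k_2=k_3$ several primes change type between the $P_{y,z}$ and $P_{t,v,x}$ families, so the corresponding columns of $M$ merge contributions and a direct inspection gives instead $d_2=\gcd(q\mid q\in Q)$ with $Q$ as defined. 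Verifying that the normal form produces precisely these two gcd descriptions, together with the multiplicities above, completes the proof.
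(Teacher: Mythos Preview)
Your proposal is correct and follows essentially the same approach as the paper: realize $\Cl(S)$ as the cokernel of the integer matrix built from Lemma~\ref{prin2}, count minimal primes to get the free rank $f=N-(n-2)$, and reduce to Smith normal form using the identity sub-blocks coming from the square-free sides of the relations. The only organizational difference is that the paper performs the reduction in the opposite order from what you sketch: it first uses the columns indexed by $\{k_5+1,\dots,n\}$ (the square-free side of relation~2) to produce, after row operations within each block $R_{t,v}$ and $R_y$, exactly $n-k_5-1$ rows whose sole nonzero entry is one of the numbers $a_ta_v+b_v$, $a_ta_v$, $b_t$, $a_y$ listed in the proof, so $d_2$ is read off directly as their gcd (the two cases in the statement are just two simplifications of this single gcd); then the remaining block is shown to be literally the presentation matrix of the one-relator monoid $T=\langle u_1,\dots,u_{k_4}\mid w_1=w_2\rangle$, so a single invocation of Theorem~\ref{1main} yields the $d_1$ factors, rather than the two parallel applications you envisage.
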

\begin{proof} As mentioned earlier, withou loss of generality we may assume
that $n=m$. It is shown in the proof of Theorem~\ref{2relations}
that $SS^{-1} \cong \Fa_{n-2}$, the free abelian group of rank
$n-2$. Because $\U(S) = \{1\}$, we get that $P(S)$ and $SS^{-1}$
are isomorphic, and thus they have the same torsion free rank.
Since the torsion free rank of $\Cl(S)$ is the difference of the
torsion free rank of $D(S)$ and the torsion free rank of $P(S)$,
to establish the description of the torsion free part of $\Cl(S)$,
we only need to show that there are $(k_{3} - k_{2})(k_{4}-k_{3})+
(k_{5} - k_{4})(n - k_{5}) + k_{1}(k_{4} - k_{3} +
k_{2}-k_{1})(n-k_{5}) + (k_{3} - k_{2})(k_{2}-k_{1})(n-k_{5})$
minimal primes in $S$. But this easily follows from the
description of the minimal primes given Lemma~\ref{prin2}.

As in the proof of Theorem~\ref{1main}, we consider
$\Cl (S)$ as a finitely generated $\mathbb
Z$-module, so that its presentation is determined by
an integer matrix $M$ of size $r\times n$, where $r$
is the number of minimal primes in $S$, hence the
basis of $D(S)$. Therefore, the rows are indexed by
all triples $(t,v,x)$ and all pairs $(y,z)$, as
described in Lemma~\ref{prin2}. We agree on the
following ordering of the set of rows of $M$: all
triples $(t,v,x)$ are ordered lexicographically, so
are all the pairs $(y,z)$ and $(t,v,x)<(y,z)$ for
every $t,v,x, y,z$. The columns are indexed by
$1,2,\ldots, n$, where the $i$-th column corresponds
to the generator $Su_{i}$, written as a vector in
terms of the minimal primes of $S$, as in
Lemma~\ref{prin2}.

We consider the block decomposition of $M$
determined by the following partitions of the sets
$C$ and $R$ of columns and rows: $$C=D_{1}\cup
D_{2},$$ where $D_{1}=\{ 1,\ldots ,k_{5} \}$ and
$D_{2}=\{ k_{5}+1,\ldots ,n\}$. Notice that
$|D_{2}|\geq 2$  because $S$ does not admit a
presentation with one defining relation. Let $R_{0}=
\{ (y,z) \mid y\in \{ k_{2}+1, \ldots ,k_{3}\}, z
\in \{ k_{3}+1,\ldots ,k_{4}\} \}$, $R_{y}=\{ (y,z)
\mid z\in \{ k_{5}+1, \ldots ,n\} \}$ for $y\in \{
k_{4}+1,\ldots ,k_{5}\}$. For every triple $(t,v,x)$
we also define $R_{t,v}=\{ (t,v,x) \mid x\in
\{k_{5}+1,\ldots ,n\} \}$. Then $$R=\bigcup
R_{t,v}\cup R_{0}\cup \bigcup
_{y=k_{2}+1}^{k_{3}}R_{y},$$ where the first union
runs over all pairs $(t,v)$ such that the set $R$ of
rows contains a triple of the form $(t,v,x)$.

Consider any of the block submatrices $M_{R_{t,v},C}$ or
$M_{R_{y},C}$, with $R_{t,v},R_{y}$ as above. From
Lemma~\ref{prin2} it follows that, ignoring the zero columns of
this submatrix, it has the form
$$\left( \begin{array}{cccccc}
a &  b & d & 0 & \cdots & 0 \\
a & b  &  0 & d & \cdots & 0 \\
\cdots & \cdots & \cdots& \cdots& \cdots& 0 \\
a & b & 0 & \cdots & 0 & d
\end{array} \right) ,$$
for some $a,b$ such that either $a=1$ or $b=1$ and
for some $d$. Here the columns of the scalar matrix
determined by $d$ are indexed by $D_{2}$. So,
subtracting the first row in each such block
($M_{R_{t,v},C}$ or $M_{R_{y},C}$) from all the
remaining rows in this block and next subtracting
the last $n-(k_{5}+1)$ columns of the entire matrix
from column $k_{5}+1$, we get a matrix $M'$ such
that each block $M'_{X,D_{1}}$, for $X=R_{y}$ or
$X=R_{t,v}$, has only the first row nonzero and
$M'_{R,D_{2}}=M_{R,D_{2}}$. Moreover
$M'_{R_{0},C}=M_{R_{0},C}$. Therefore, the nonzero
entries of the last column of $M'$ are the only
nonzero entries in their respective rows. Denote by
$Y$ the set of all such  rows of $M'$. Then these
nonzero entries (in the last column of $M'$),  and
with our convention as explained before the theorem,
are: $$\begin{array}{cclll} a_{t}a_{v}+b_{v} &
\mbox{ for } & t\in \{ 1,\ldots, k_{1} \}, v\in \{
k_{1}+1,\ldots ,k_{2}\} & \mbox{ if } & k_{1}\neq
k_{2} \\ a_{t}a_{v} & \mbox{ for } & t\in \{
1,\ldots, k_{1} \}, v\in \{ k_{3}+1,\ldots ,k_{4}\}
& \mbox{ if } & k_{3}\neq k_{4} \\ b_{t} & \mbox{
for } & t\in \{ k_{1}+1,\ldots ,k_{2} \} & \mbox{ if
} & k_{1}\neq k_{2}, k_{2}\neq k_{3}\\ a_{y} &
\mbox{ for } & y\in \{ k_{4}+1,\ldots ,k_{5} \} &
\mbox{ if } & k_{4}\neq k_{5} .
\end{array} $$
 Notice that the greatest common divisor of the
specified set of elements is equal to $d_{2}$, as
defined in the statement of the theorem. Thus, row
elimination within the block $M'_{Y,C}$ allows us to
produce a row of the form $(0,\ldots ,0, d_{2})$ and
replace all other rows of $M'_{Y,C}$ by zero rows.
The same argument can be applied to the nonzero
entries in the subsequent columns: $n-1, n-2,
\ldots,k_{5}+2$. This leads to a matrix $M''$ (of
the same size as the original matrix $M$) with
$n-k_{5}-1$ rows of the form $(0,\ldots
,d_{2},0,\ldots ,0)$, with $d_{2}$ in positions
$k_{5}+2,\ldots ,n$, with no other nonzero entries
in their respective columns. So, it remains to find
the normal form of the matrix $N$ obtained by
deleting in $M''$ the last $n-k_{5}-1$ columns and
the rows that contain the nonzero entries in these
columns. It is easy to see that the last column of
$N$ is a $\mathbb Z$-combination of the remaining
columns. Namely, we have
$C_{k_{5}+1}=a_{1}C_{1}+\cdots +a_{k_{1}}C_{k_{1}}
+b_{k_{1}+1}C_{k_{1}+1} +\cdots +
b_{k_{2}}C_{k_{2}}+a_{k_{4}+1}C_{k_{4}+1}+\cdots +
a_{k_{5}}C_{k_{5}}.$   Hence by column operations we
can make this column zero. Then, deleting this
column, we get a matrix with $k_{4}$ columns that is
of he form $\left(\begin{array}{cc} N' & 0
\\ 0 & I
\end{array} \right)$ for a matrix $N'$ and the
identity $(k_{5}-k_{4})\times (k_{5}-k_{4})$-matrix
$I$. It is easy to see that $N'$ corresponds to the
monoid $T$ with the presentation $u_{1}\cdots
u_{k_{1}}u_{k_{2}+1}\cdots
u_{k_{3}}=u_{k_{1}+1}^{a_{k_{1}+1}}\cdots
u_{k_{2}}^{a_{k_{2}}}u_{k_{3}+1}^{a_{k_{3}+1}}\cdots
u_{k_{4}}^{a_{k_{4}}}$ and with the generating set
$u_{1},\ldots ,u_{k_{4}}$.  Hence, by
Theorem~\ref{1main}, $\Cl (T) ={\mathbb Z}^{e}
\times {\mathbb Z}_{d_{1}}^{k_{3}-k_{2}+k_{1}-1}$,
where $e=
(k_{1}+k_{3}-k_{2})(k_{2}-k_{1}+k_{4}-k_{3})-(k_{4}-1)$.
Therefore, the normal form of $M$ has
$k_{3}-k_{2}+k_{1}-1$ copies of $d_{1}$ and
$n-k_{5}-1$ copies of $d_{2}$ and a certain number
of entries equal to $1$. By the comment at the
beginning of the proof, it must have $f$ zero rows.
Hence, the result follows.
\end{proof}

{\bf Acknowledgments} This research was supported by
the Onderzoeksraad of Vrije Universiteit Brussel,
Fonds voor Wetenschappelijk Onderzoek (Flanders),
Flemish-Polish bilateral agreement BIL2005/VUB/06
and a MNiSW research grant N201 004 32/0088
(Poland). The first author was also funded by a Ph.D
grant of the Institute for the Promotion of
Innovation through Science and Technology in
Flanders (IWT-Vlaanderen).

The authors are grateful to the referee for making several
valuable comments and suggestions. This resulted in a completely
revised format of an earlier version of the paper.

\vspace{20pt}

 \noindent
 \begin{tabular}{ll}
 I. Goffa and E. Jespers & J. Okni\'{n}ski\\
 Department of Mathematics& Institute of Mathematics\\
 Vrije Universiteit Brussel & Warsaw University\\
 Pleinlaan 2& Banacha 2\\
 1050 Brussel, Belgium& 02-097 Warsaw, Poland\\
 efjesper@vub.ac.be and igoffa@vub.ac.be & okninski@mimuw.edu.pl
 \end{tabular}

\end{document}